\pgfplotsset{compat=1.15}
\pgfplotsset{ticks = none}
\newcommand*{\N}{\mathbb{N}}
\newcommand*{\Z}{\mathbb{Z}}
\newcommand*{\R}{\mathbb{R}}
\newcommand*{\msf}{\mathsf{m}}
\renewcommand{\phi}{\varphi}
\renewcommand{\epsilon}{\varepsilon}
\renewcommand{\theta}{\vartheta}
\newtheorem{lem}{Lemma}[section]
\newtheorem{prop}{Proposition}[section]
\newtheorem{defn}{Definition}[section]
\newtheorem{thm}{Theorem}[section]
\newtheorem{cor}{Corollary}[section]
\newtheorem{example}{Example}[section]
\newtheorem{rem}{Remark}[section]
\newtheorem{ass}{Assumptions}[section]
\newcommand{\labeltext}[2]{%
  \@bsphack
  \MakeLinkTarget*{#1}%
  \def\@currentlabel{#1}{\label{#2}}%
  \@esphack
}
\title{Locally Markov walks on finite graphs}
\author{Robin Kaiser, Lionel Levine, Ecaterina Sava-Huss}
\date{\today}
\begin{document}
\maketitle

\begin{abstract}
Locally Markov walks are natural generalizations of classical Markov chains, where instead of a particle moving independently of the past, it decides where to move next depending on the last action performed at the current location. We introduce the concept of locally Markov walks and we describe their stationary distribution and recurrent states, and we prove several properties such as irreducibility and ergodicity. For a particular  locally Markov walk - the uniform unicycle walk on the complete graph - we investigate the mixing time and we prove that it exhibits cutoff.
\end{abstract}

\textit{Keywords:} random walk, total chain, unicycle, stationary distribution, mixing time, cutoff.\\
\textit{2020 Mathematics Subject Classification.} 60J10, 37A25,  05C81, 15A18.

\section{Introduction}
\textcolor{black}{Consider the lattice $\mathbb{Z}^2$, where each vertex is equipped with an arrow pointing toward one of its four neighboring vertices. A particle then moves according to the following rule: when the particle is at site $x$, we first rotate the arrow at $x$ clockwise to the next neighbor of $x$, and then move the particle along this updated arrow. This deterministic process is known as a rotor walk. If, instead, the updating of arrows is randomized — depending only on the current orientation of the arrow — then we obtain a stochastic generalization of rotor walks, that we call \textit{locally Markov walks}.}

The aim of the current work is to introduce and investigate \emph{locally Markov walks on  graphs} $G=(V,E)$ as stochastic processes that obey a weak form of the Markov property: if the walker's current position is $x\in V$, then the distribution of the next step depends on the previous exit from $x$, thus interpolating between random walks (where the exits are i.i.d.) and their deterministic analogues rotor walks (in which the exits are periodic). For such processes, we describe the stationary distribution, eigenvalues, recurrent states, irreducibility, and mixing times. While locally Markov walks admit a natural definition on infinite graphs, in this work we confine our analysis to finite graphs, which enables the derivation of general results.

For locally Markov walks, a natural question is to determine which typical properties of Markov chains carry over to this new setting and which are lost, given that these processes satisfy only a weak form of the Markov property. A fundamental feature of Markov chains is the existence and uniqueness of a stationary distribution together with convergence to stationarity; in Section \ref{sec:st-distr}, we establish the existence of stationary distributions for locally Markov walks and in Proposition \ref{prop:ergodic} show that ergodic properties are likewise preserved. \textcolor{black}{We also show that, in the limit, the one-step transition probabilities of a locally Markov walk behave like single steps of a Markov chain.}

{\color{black}
To state the main results, we introduce locally Markov walks and their local chains; see Section \ref{sec:local-total-walks} for further details. Let $G=(V,E)$ be a finite, strongly connected, directed graph, and we denote by $N_x$ the neighbours of $x$ in $G$. Consider a discrete-time random walk $(X_n)_{n\in\mathbb{N}}$ on $G$, which is not assumed to be a Markov chain. We call $(X_n)_{n\in\mathbb{N}}$ a locally Markov walk if, for each vertex $x\in V$, the sequence of successive exits from $x$ forms a Markov chain.
For $x\in V$, let $(\tau_k^x)_{k\in\mathbb{N}}$ denote the sequence of successive visits to $x$, defined by $\tau_0^x:=0$ and, for $k\geq 1$,
\begin{equation}\label{eq:exit-times}
\tau_k^x := \min\{j > \tau_{k-1}^x : X_j = x\}.
\end{equation}
The stopping time $\tau_k^x$ represents the $k$-th visit of $(X_n)_{n\in\mathbb{N}}$ to $x$. We then define
\begin{equation}\label{eq:local-chains}
L^x(k) := X_{\tau_k^x+1},
\end{equation}
so that $L^x(k)$ is the location of the walk immediately after its $k$-th visit to $x$. The process $(L^x(k))_{k\in\mathbb{N}}$ is called the local chain at $x$. Thus, $(X_n)_{n\in\mathbb{N}}$ is a locally Markov walk on $G$ if, for every $x\in V$, the associated local chain $(L^x(k))_{k\in\mathbb{N}}$ is a Markov chain. By enlarging the state space from $V$ to $V \times V^{V}$ and recording, in addition to the current position of the particle, the most recent exit from each vertex, one obtains a Markov chain on $V \times V^{V}$, referred to as the total chain associated with the locally Markov walk $(X_n)_{n\in\N}$. The total chain is uniquely determined by the collection of local chains.
We summarize below the main results of the paper.

\begin{thm}\label{thm:erg}
Let $(X_n)_{n\in\N}$ be a locally Markov walk on a finite, strongly connected graph $G=(V,E)$. Assume that all local chains have strictly positive transition probabilities on $N_x$ for all $x\in V$, and let $Q=(q_x(y))_{x,y\in V}$ be the stochastic matrix indexed over $V\times V$, whose row at index $x\in V$ is the stationary distribution of the local chain at $x\in V$. 
\begin{enumerate}[leftmargin=\parindent,align=left,labelwidth=\parindent,labelsep=0pt]
\item[(1)] (Proposition \ref{thm:stat-dist})  The stationary distribution of the total chain associated with  $(X_n)_{n\in\N}$ is obtained from the $q$-weighted spanning unicycles of $G$\label{thm3}.
\item[(2)] (Proposition \ref{prop:single-step-conv})  For all $x,y\in V$ we have
$\lim_{n\rightarrow\infty}\mathbb{P}(X_{n+1}=x\vert X_n=y)=q_y(x).$\label{thm2}
\end{enumerate}
\end{thm}
}
In Section \ref{sec:unicycle-walk}, we analyze the uniform unicycle walk, defined as the Markov chain induced by the simple random walk in which, on top of the particle location, we also keep track of all its previous past actions at every vertex. On the complete graph, we show that the uniform unicycle walk exhibits an abrupt convergence to stationarity, i.e., the cutoff phenomenon, and we compute the spectrum of its transition matrix; see Section \ref{sec:unicycle-walk} for details. We establish the following result.

\begin{thm}\label{thm:cut-off}
The uniform unicycle walk on the complete graph $K_m$ with $m$ vertices exhibits cutoff at time $m\log(m)$, for 
 $m\in\N$. 
\end{thm}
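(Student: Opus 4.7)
The plan is to establish cutoff by bounding the total variation distance to stationarity both above and below in a window of width $O(m)$ around time $m\log m$. The scale $m\log m$ is suggestive: it is precisely the coupon-collector time for simple random walk on $K_m$ to visit every vertex at least a constant number of times. My strategy decomposes the relaxation mechanism into (i) the walker's position mixing, which on $K_m$ happens in $O(1)$ steps, and (ii) the local memory at each vertex relaxing, which requires that vertex to be visited sufficiently many times. From the construction of the uniform unicycle walk in Section \ref{sec:unicycle-walk} I would identify the state space as the walker's location paired with local memory encoding each vertex's past exit sequence, and apply Theorem \ref{thm:erg} together with the symmetry of $K_m$ to identify the stationary distribution as uniform. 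The spectral computation announced in Section \ref{sec:unicycle-walk} should further give $1 - \lambda_{\ast} = \Theta(1/m)$, so the relaxation time is $\Theta(m)$, of strictly smaller order than $m\log m$, which is the order-of-magnitude separation necessary for cutoff.

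For the upper bound I would implement a coupling of two copies of the walk starting from arbitrary initial states, sharing a common source of randomness for the local updates at each vertex. Once the two walkers occupy the same location they can be coalesced in position, and thereafter whenever either walk visits a vertex $x$ the same local update is applied in both copies. Since each individual local chain mixes in $O(1)$ steps, it suffices to bound the time $T$ by which every vertex has been visited at least some large constant number of times. Standard coupon-collector estimates for simple random walk on $K_m$ give $T \le m\log m + c\cdot m$ with probability at least $1 - O(e^{-c})$, so the total variation distance at time $m\log m + c\cdot m$ tends to $0$ as $c\to\infty$.

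For the lower bound I would exhibit a distinguishing statistic at time $m\log m - c\cdot m$. The natural choice is the number of vertices not yet visited by the walker: by coupon collector this count is concentrated near $e^{c}$, and the local memory at any such vertex retains its initial configuration, which can be made macroscopically distinguishable from a uniformly random local memory under stationarity. Quantifying this via a test function on states together with a second-moment concentration argument gives total variation distance at time $m\log m - c\cdot m$ tending to $1$ as $c\to\infty$.

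The principal technical obstacle is in executing the coupling: local updates at a vertex $x$ depend on when the walker visits $x$, and visit times are not independent across the two coupled copies until the positions have coalesced. A clean workaround is to sample each vertex's entire sequence of local updates in advance and apply them deterministically in both copies as visits accrue; combined with the coupon-collector tail bound, this reduces the analysis to controlling the position-coalescence time, which is essentially immediate on $K_m$, and the visit counts accumulated at each vertex, which is a routine occupancy-problem computation.
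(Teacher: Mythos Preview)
Your plan is workable but takes a substantially different route from the paper. The paper's key observation is that the uniform unicycle walk is precisely the \emph{Aldous--Broder backward algorithm} run on $K_m$: conditioned on the event that the underlying simple random walk has covered all vertices by time $t$, the configuration $(X_t,\rho_t)$ is \emph{exactly} stationary, not just approximately. This yields the identity
\[
d^{(m)}(t)=\max_{x}\mathbb{P}_x\bigl(C^{(m)}\geq t\bigr),
\]
so both the upper and lower bounds reduce immediately to the two-sided coupon-collector tail estimates, with no coupling, no distinguishing statistic, and no second-moment argument needed. Your coupling/test-function approach is the standard general-purpose machinery and would succeed here, but it obscures the exact structure that makes the problem tractable; in particular you do not need each vertex visited ``a large constant number of times''---since the local chain at every vertex of $K_m$ has i.i.d.\ uniform transitions, a \emph{single} visit already places the rotor in its stationary marginal, which is why the cover time alone governs mixing. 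The spectral-gap heuristic you mention (relaxation time $\Theta(m)\ll m\log m$) is correct intuition but is not invoked in the paper's proof at all.
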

\textbf{Outline of the paper.} 
In Section \ref{sec:local-total-walks}, we introduce locally Markov walks and show how some known models can be described as locally Markov walks. We also introduce the most important tool for studying locally Markov walks, the total chain associated to a locally Markov walk. This construction embeds a locally Markov walk into an ordinary Markov chain framework. In Section \ref{sec:rec}, we use the total chain description to study the recurrent states of locally Markov walks, to identify the stationary distribution, and to prove Theorem \ref{thm:erg}.
In Section \ref{sec:unicycle-walk}, we analyze the total chain of a simple random walk, derive the eigenvalues of its transition matrix for regular graphs, and establish the cutoff phenomenon for the total chain on complete graphs, as stated in Theorem \ref{thm:cut-off}.

\subsection{Related works}\label{sec:related}

\textbf{Rotor walks.} Our primary motivation for introducing and studying random walks with locally Markovian step distributions was to generalize the rotor walk model, independently introduced in \cite{rotor1,rotor2,rotor3}. Several results in Section \ref{sec:local-total-walks} were inspired by analogous results for rotor walks. In particular, the total chain construction for locally Markov walks, as defined in Definition \ref{def:totalchain}, corresponds directly to the framework used to define rotor walks.
For rotor walks on finite graphs, the recurrent rotor configurations correspond to unicycle configurations \cite[Corollary 3.5]{lev-hol-propp:rotor}. Hence, Proposition \ref{prop:recurrentstates} extends this result to the broader setting of locally Markov walks.

\textbf{Excited random walks.} Excited random walks on $\mathbb{Z}^d$ are processes in which the walker has a directional bias upon its first visit to a vertex, while on subsequent visits it chooses a neighbor uniformly at random \cite{excited-walk}. These walks have been further generalized to multi-excited or cookie random walks, in which each vertex is equipped with a stack of cookies. Each cookie contributes to a bias on the walker’s next step, and once all cookies at a vertex are exhausted, the walker moves uniformly at random. Cookie random walks have been studied with different types of stacks present at every vertex, including random but finite stacks \cite{cookie}, infinite periodic stacks \cite{periodic-cookie}, and stacks with finitely many types of cookies arranged in a Markovian way \cite{markovian-cookie}. 
Excited random walks  fall within the framework of locally Markov walks, whereas cookie random walks can be represented as locally Markov walks via a multiple-edge construction as described in Remark \ref{rem:multipleedge}.
See the survey \cite{excited-survey} for a collection of open problems on excited walks.

\textbf{Reinforced random walks.} Reinforced random walks are random walks in which the weight of an edge $e$ at time $n$ depends on the number of times that edge has been traversed up to time $n$. In \cite{davis}, it is shown that reinforced random walks on $\Z$ is either recurrent or it has finite range. Reinforced random walks on trees have been investigated in Pemantle \cite{pemantle}.
Reinforced walks do not fall within the class of locally Markov walks; however, they are closely related processes in which the current behaviour is influenced by the past behaviour at the present location.

\textbf{Tree walk.} In \cite{markov-tree}, the author introduced a random walk on the set of spanning trees of a finite graph, where the root of the tree performs a simple random walk at each time step and the edge set is updated accordingly to preserve the spanning-tree structure. This “tree random walk’’ was originally developed as a tool for proving the Markov chain tree theorem. Its construction is closely related to the total chain associated with a simple random walk. In Section \ref{sec:unicycle-walk}, we make use of this connection to extend the eigenvalue results of \cite{atha} for the tree random walk to the case of the uniform unicycle walk.

\section{Locally Markov walks}\label{sec:local-total-walks}

Let $G=(V,E)$ be a \textcolor{black}{locally finite, strongly connected}, directed graph with vertex set $V$ and edge set $E$, and denote by $"\rightarrow"$ the neighborhood relation in $G$. For an edge $(x,y)\in E$ we write sometimes $x\rightarrow y$, and for $x\in V$, we write $N_x$ for the set of neighbors of $x$, that is $N_x=\{y\in V:\ x \rightarrow y\}$. 
Recall the definition of the random process $(X_n)_{n\in\N}$ (not necessary Markovian), of the exit times from Equation \eqref{eq:exit-times}, and of the local chain $L^x$ at $x\in V$ as defined in \eqref{eq:local-chains}.
\begin{defn}\label{defn:locally-markov}
A random walk $(X_n)_{n\in\N}$ on $G$ is called a \emph{locally Markov walk}, if the process $(L^x(k))_{k\in\N}$ is a Markov chain for every $x\in V$.
\end{defn}
The initial state $(L^x(0))_{x\in G}$ of the locally Markov walk may be fixed or random.
A locally Markov walk satisfies a weaker Markov property: unlike a Markov chain, where the next step depends solely on the current state, it also depends on the action taken the last time the walker was at the current location.

\begin{example}\normalfont
\textbf{Excited random walk \cite{excited-walk} on $\Z$.} An excited walk is a random walk that has a drift in some direction when first visiting a vertex, and moves uniformly at random to the neighbours on each subsequent visit. More precisely, the excited random walk on $\Z$ with drift of magnitude $\varepsilon>0$ to the right can be modeled as a locally Markov walk \textcolor{black}{as follows}. 
Let each $n\in\Z$ have the three neighbours $n+1,n-1$ and $n$ itself, i.e.~we introduce self-loops for technical purposes. The local chain at $n\in\Z$ has state space $\{n-1,n,n+1\}$ and the transition probabilities are 
\begin{align*}
p(n,n+1)=\frac{1+\varepsilon}{2},&&p(n,n-1)=\frac{1-\varepsilon}{2},&&p(n,n)=0,
\end{align*}
if the previous exit from $n$ was to $n$ itself and
\begin{align*}
p(n+1,n+1)=p(n+1,n-1)=p(n-1,n+1)=p(n-1,n-1)=\frac{1}{2},
\end{align*}
in all the other cases.
Then the locally Markov walk with the above local transition probabilities starting from vertex $0$ with initial states  $(L^n(0))_{n\in\mathbb{Z}}$ of each chain pointing to itself, i.e.~given by $L^n(0)=n$ for every $n\in \N$,
is an excited random walk with initial right drift. 
\end{example}

\begin{example}\label{example:cyclic-gr}\normalfont
\textbf{Rotor walks on cyclic graphs.} For $m\in \N$, denote by $C_m$ the cycle graph with vertex set $V=\{0,1,...,m-1\}$ and edge set given by
$E=\{(k,l):\ k=l+1 \text{(mod $m$) or }k=l-1\text{(mod $m$)}\}.$
Define a deterministic walk $(X_n)_{n\in\N}$ called \emph{rotor walk or rotor-router walk} on  $C_m$ as follows. Let $X_0=0$ and $\rho_0\in \prod_{x\in V}N_x$; we call $\rho_0$ the rotor configuration at time $0$. Given the location $X_n$ of the walker and the rotor configuration $\rho_n$ at time $n\geq 0$, one step of the walk consists of the following: the walker changes the rotor configuration $\rho_n(X_n)$ at the current location to the other neighbor of $X_n$, then the walker moves to this new neighbor. That is, at time $n+1$ the rotor configuration and the particle (walker) position are
\begin{align*}
&\rho_{n+1}(x)=\begin{cases}\rho_n(x),&x\neq X_n\\
N_x\setminus\{\rho_n(x)\},&x=X_n\end{cases},\\
&X_{n+1}=\rho_{n+1}(X_n).
\end{align*} 
The rotor walk $(X_n)_{n\in\N}$ is a deterministic example of a locally Markov walk, as the only information needed to perform a step is the current location of the walker and the rotor at this location. For each $\ell\in C_m$, the local chain $L^{\ell}$ at $\ell$ is a deterministic walk on $\{\ell-1,\ell+1\}$ with transition
$\begin{pmatrix}
0 & 1 \\
1 & 0
\end{pmatrix}$.
\end{example}

\begin{example}\label{ex:p-walk}\normalfont
\textbf{$p$-walks \cite{p-walk} on $\Z$} are locally Markov walks that generalize rotor walks, in which at each time step the probability of the rotor being broken is $p\in(0,1)$. So a $p$-walk evolves as a rotor walk with probability $p$, and with probability $1-p$ the rotor does not work and the walker moves to the neighbor the rotor is pointing at. Thus for each $\ell\in\Z$, the local chain $L^{\ell}$ at $\ell$ is a Markov chain with state space $\{\ell-1,\ell+1\}$ that stays at $\ell-1$ (respectively $\ell+1$) with probability $1-p$ and moves from $\ell-1$ to $\ell+1$ and from $\ell+1$ to $\ell-1$ with probability $p$. 
\end{example}

\begin{rem}\normalfont\label{rem:multipleedge}
\textbf{Random walks with hidden local memory.} Inspired by the study of hidden Markov chains, one can generalize the locally Markov walks, so that the local state remembers more than the last exit. Suppose that each $x\in V$ is endowed with a local countable state space $S_x$, and a hidden mechanism $H_x=(h_x(s,t))_{s,t\in S_x}$ which is a Markov chain on $S_x$ with transition probabilities $h_x(\cdot,\cdot)$. 
Furthermore, for every $x\in V$ a jump rule as a function $f_x:S_x\rightarrow\text{Prob}(N_x)$ is given, where $\text{Prob}(N_x)$ is the set of probability distributions on $N_x$. A hidden state configuration $\kappa$ is then a vector indexed over the vertices $V$ with $\kappa(x)\in S_x$ for all $x\in V$.
We can then define a random walk with hidden memory as a Markov chain $(X_n,\rho_n,\kappa_n)_{n\in\N}$ with the following transition rules: given the state $(x_0,\rho_0,\kappa_0)$ at time $0$, for every $n\geq 1$
\begin{align*}
\text{(i) } \kappa_{n+1}(x)=\begin{cases}
K_n, & \text{if }x=X_n\\
\kappa_n(x), &\text{else}
\end{cases},&&
\text{(ii) } \rho_{n+1}(x)=\begin{cases}
Y_n,&\text{if }x=X_n\\
\rho_n(x),&\text{else}
\end{cases},&&
\text{(iii) } X_{n+1}=Y_n,
\end{align*}
where, given $X_n=x$, $K_n$ is a random element of $S_x$ sampled from the distribution $h_x(\kappa_n(x),\cdot)$ independent of the past and $Y_n$ is a random neighbor of $x$ sampled from $f_{x}(K_n)$ independent of the past. Intuitively, at each time step the walker at $x$ first updates its internal state $s$ to state $t\in S_v$ with distribution $h_x(s,t)$, and then it jumps to a random neighbor of $x$ with distribution $f_{x}(t)$.

\textbf{Multiple edge construction of the hidden Markov model.} The random walk with hidden local memory can be seen as a locally Markov walk in the sense of Definition \ref{defn:locally-markov}, if we allow the graph to have multiple edges labeled by the internal states. We describe shortly this construction. 
Let $G'$ be the graph obtained from $G=(V,E)$ in the following way. The vertex set of $G'$ is $V$, and there is an edge from $x$ to $y$ labeled by $s\in S_x$ if $(x,y)$ is an edge in $G$; we denote this edge by $(x,y)_s$.
We define a local transition rule at $x$ for $y,z\in N_x$ and $s,t\in S_x$ by
$$\mathsf{m}'_x((x,y)_s,(x,z)_t):=h_x(s,t)f_x(t)(z),$$
where $h_x$ is the transition rule for the internal states at $x$ and $f_x$ is the jump rule in the hidden Markov model.
The locally Markov walk $(X'_n)_{n\in\N}$ on $G'$ with local transition rules $\mathsf{m}'_x$ for all $x\in V$ is a random walk with hidden local memory $(X_n,\rho_n,\kappa_n)_{n\in\N}$ in the sense that
$(X'_n)_{n\in\N}\overset{d}{=}(X_n)_{n\in\N}.$
\end{rem}
 
\begin{example}\normalfont
\textbf{Cookie random walks \cite{cookie-singh}.} An example of a random walk with hidden local memory is the cookie random walk, also sometimes referred to as multi-excited random walk. Intuitively, the cookie random walker first has to eat through a stack of cookies at each vertex before it starts to move uniformly at random. To model this as a locally Markov walk on $\Z$, first we fix some $M\in\N$ and numbers $p_1,p_2,...,p_M\in(1/2,1]$. For each $n\in\Z$, connect $n$ to $n+1$ via $M+1$ edges denoted by $e^{n,n+1}_i$ for $i\in\{1,...,M+1\}$, as well as to $n-1$ via $M+1$ edges $e^{n,n-1}_i$ for $i\in\{1,...,M+1\}$. Then the local transition probabilities of the chain at $n\in \Z$ are given by 
\begin{align*}
p(e^{n,n+1}_i,e^{n,n+1}_{i+1})=p(e^{n,n-1}_i,e^{n,n+1}_{i+1})=p_i,&&p(e^{n,n+1}_i,e^{n,n-1}_{i+1})=p(e^{n,n-1}_i,e^{n,n-1}_{i+1})=1-p_i,
\end{align*}
for all $1\leq i \leq M$ and
\begin{align*}
p(e^{n,n+1}_{M+1},e^{n,n+1}_{M+1})=p(e^{n,n-1}_{M+1},e^{n,n+1}_{M+1})=p(e^{n,n+1}_{M+1},e^{n,n-1}_{M+1})=p(e^{n,n-1}_{M+1},e^{n,n-1}_{M+1})=\frac{1}{2}.
\end{align*}
The locally Markov walk with these local transition probabilities starting at $0$ with initial states \textcolor{black}{$(L^n(0))_{n\in\Z}$} given by $L^n(0)=e^{n,n+1}_{1}$
for all $n\in\Z$ is then a cookie random walk on $\Z$.
A simple modification of the transition probabilities also allows us to treat cookie random walks with infinitely many periodic cookies \cite{periodic-cookie}, by defining the transition probability of the $M$-th cookie as
\begin{align*}
&p(e^{n,n+1}_M,e^{n,n+1}_1)=p(e^{n,n-1}_M,e^{n,n+1}_1)=p_M,&&p(e^{n,n+1}_M,e^{n,n-1}_1)=p(e^{n,n-1}_M,e^{n,n-1}_1)=1-p_M.
\end{align*}
The example presented above is a special case of the cookie random walk studied in \cite{cookie}, in which a more general distribution of cookies was allowed, which can not be captured using only finitely many edges in the multiple edge construction of locally Markov walks.
\end{example}
\subsubsection*{Total chain of a locally Markov walk}\label{subsec:totalchain}

A locally Markov walk is determined by its local chains. Suppose that for every $x\in V$ we are given the local chain $(L^x(k))_{k\in\N}$, i.e.~a Markov chain with state space $S_x\subset V\setminus\{ x \}$ and transition matrix indexed over $S_x$ and denoted $\mathsf{M}_x=(\mathsf{m}_x(y,z))_{y,z\in S_x}$, that is, for $y,z\in S_x$ it holds:
$$\mathsf{m}_x(y,z)=\mathbb P[L^x(k+1)=z|L^x(k)=y],\quad \text{for }k\in\N.$$ 
For the Markov chain with hidden local memory, the set $S_x$ of internal states associated to $x$ is an arbitrary countable state space, while from now on, $S_x$ denotes a subset of the vertex set $V$ of $G$.
By abuse of notation, we regard the matrices $\mathsf{M}_x$ \textcolor{black}{as indexed over the vertices $V$ of $G$} , by adding zero entries for all rows and columns corresponding to vertices not in $S_x$.
By enlarging the state space $V$ of a locally Markov walk to $V\times V^{V}$ and keeping track not only of the particles current location, but also of the last exit from each vertex which can be encoded by a configuration $\rho:V\to V$, one obtains a Markov chain defined uniquely by the local chains $\{L^x,\ x\in V\}$ (which are Markov chains) and their corresponding transition matrices $\{\mathsf{M}_x:\ x\in V\}$.

\begin{defn}[Total chain associated to a locally Markov walk]\label{def:totalchain}
A locally Markov walk $(X_n)_{n\in\N}$ on $G=(V,E)$ can be lifted to a Markov chain $(X_n,\rho_n)_{n\in\N}$ with state space $\mathcal{S}=V\times V^V$ and  transition matrix $P=(p((x,\rho),(y,\eta)))_{(x,\rho),(y,\eta)\in V\times V^V}$ whose entries, i.e.~the transition probabilities in one step, are given, for $(x,\rho),(y,\eta)\in \mathcal{S}$, by 
\begin{align*}
p((x,\rho),(y,\eta))=\begin{cases}\mathsf{m}_x(\rho(x),y),&\rho \triangle \eta=\{x\}\text{ and }\eta(x)=y,\\0,&\text{else}\end{cases},
\end{align*}
with $\rho\triangle\eta=\{x\}$ indicating that $\rho$ and $\eta$ agree at all vertices except $x$. We refer to $(X_n,\rho_n)_{n\in\N}$ as the total chain associated with the locally Markov walk  $(X_n)_{n\in\N}$.
\end{defn}
\begin{rem}\normalfont
We highlight the connection between the configurations $(\rho_n)_{n\in\N}$ of the total chain and the local chains $(L^x(k))_{k\in\N}$ for  $x\in V$. If $\tau^x_k$ is the time of the $k$-th visit to $x\in V$, then
$$\rho_{\tau^x_k+1}(x)=\rho_{\tau^x_k+2}(x)=...=\rho_{\tau^x_{k+1}}(x)=L^x(k).$$
The sequence $(\rho_n)_{n\in\N}$ encodes the last actions performed at all the vertices up to time $n\in\N$, while the local chain $L^x(k)$ at $x$ at time $k\in\N$ specifies the action taken upon the $k$-th visit to $x$.
\end{rem}
The \textbf{graph of the locally Markov walk $(X_n)_{n\in\N}$ on $V$} is also determined by the local chains and denoted by  $\mathsf{G}$. It has vertex set $V$ and edge set $\mathsf{E}=\{(x,y):\ x\in V,y\in S_x\}$. Note that if we are already given a graph $G=(V,E)$ and for all $x\in V$ we have $N_x=S_x$, then the graph of the locally Markov walk is actually $G$ itself, i.e.~$G=\mathsf{G}$. \textcolor{black}{From this point onward,  we assume that the underlying graph $G$ is finite with $|V|=m$ for some  $m\in\N$.}

Let for all $x\in V$, $q_x$ denote the stationary distribution of the local chain at vertex $x$. We enlarge the domain of $q_x$ from $S_x$ to all of $V$, by simply setting it to $0$ on all $y\notin S_x$.
Finally, we introduce the stochastic matrix $Q = (q_x(y))_{x,y \in V}$, whose $x$-th row is given by the stationary distribution vector $q_x$ of the chain $L^x$. 

\begin{ass}\label{assu}\normalfont
Throughout this paper, we assume that for every $x \in V$, the local chain $L^x$ with state space $S_x$ is irreducible and aperiodic, and that the graph $\mathsf{G}$, with vertex set $V$ and edge set $\mathsf{E}=\{(x,y):\ x\in V,\ y\in S_x\}$, is strongly connected \textcolor{black}{and finite. Moreover, we assume henceforth that $N_x = S_x$ for all $x \in V$, so that the graph $\mathsf{G}$ of the locally Markov walk coincides with the underlying graph $G$ on which the model is defined.}
\end{ass}

Under these assumptions, for each $x \in V$, the local chain $L^x$ admits a unique stationary distribution $q_x$, with $q_x(y) > 0$ for all $y \in S_x$. In particular, $q_x \mathsf{M}_x = q_x$ holds for all $x \in V$. Denote by $\pi:V\to (0,1)$ the unique, strictly positive stationary distribution of the stochastic matrix $Q$ which satisfies $\pi Q=\pi$.
The following observation is immediate. 

\begin{lem}
If $\mathsf{G}$ is strongly connected and all local chains are irreducible, then $Q$ is irreducible.
\end{lem}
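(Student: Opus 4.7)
The plan is to show that for any two vertices $x,y \in V$, there exists some $n \geq 1$ with $Q^n(x,y) > 0$, which is precisely the definition of irreducibility for the stochastic matrix $Q$.

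First I would record the key observation linking $Q$ and $\mathsf{G}$: by construction $(x,y) \in \mathsf{E}$ iff $y \in S_x$, and since the local chain $L^x$ is irreducible on $S_x$, its stationary distribution $q_x$ is strictly positive on $S_x$. Hence $Q(x,y) = q_x(y) > 0$ if and only if $(x,y) \in \mathsf{E}$. In other words, the support graph of the matrix $Q$ coincides exactly with the graph $\mathsf{G}$.

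Next, given any $x,y \in V$, strong connectivity of $\mathsf{G}$ provides a directed path $x = x_0 \to x_1 \to \cdots \to x_n = y$ with $(x_i, x_{i+1}) \in \mathsf{E}$ for $i = 0, \ldots, n-1$. By the observation above, $Q(x_i, x_{i+1}) > 0$ for every $i$, and therefore
\[
Q^n(x,y) \;\geq\; \prod_{i=0}^{n-1} Q(x_i, x_{i+1}) \;>\; 0.
\]
Since $x,y$ were arbitrary, $Q$ is irreducible.

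There is no real obstacle here; the argument is essentially a bookkeeping observation that the support graph of $Q$ is exactly $\mathsf{G}$, combined with strong connectivity. The only subtlety worth making explicit in the write-up is that the strict positivity $q_x(y) > 0$ for $y \in S_x$ follows from the standing assumption that each local chain $L^x$ is irreducible (and in particular has a unique and strictly positive stationary distribution on $S_x$).
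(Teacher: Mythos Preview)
Your argument is correct and follows exactly the same approach as the paper: both observe that $Q(x,y)>0$ iff $(x,y)\in\mathsf{E}$ (because $q_x$ is strictly positive on $S_x$), so the support graph of $Q$ is $\mathsf{G}$, and strong connectivity of $\mathsf{G}$ immediately gives irreducibility of $Q$. The only difference is that you spell out the path argument for $Q^n(x,y)>0$ explicitly, whereas the paper leaves this implicit.
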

\begin{proof}
Since $q_x(y)>0$ for all $y\in S_x$ by irreducibility, we have that $Q_{x,y}>0$ if and only if $(x,y)$ is an edge of $\mathsf{G}$. Since $\mathsf{G}$ is strongly connected this proves the claim.
\end{proof}

Since $(X_n,\rho_n)_{n\in\mathbb{N}}$ is a Markov chain on a finite state space, it is natural to investigate its transient and recurrent states, to characterize its stationary distribution and the rate of convergence thereto, and to determine whether it exhibits a cutoff phenomenon. This is the the focus of the next sections. 

\section{Recurrent states and the stationary distribution}\label{sec:rec}

\subsection{Unicycles and recurrent states}\label{sec:recurrent-states}

We show now that the recurrent states of the total chain $(X_n,\rho_n)_{n\in\N}$ associated to the locally Markov walk $(X_n)_{n\in \N}$ are the spanning unicycles of $\mathsf{G}$, and that the stationary distribution is supported on such spanning unicycles weighted accordingly.
A directed spanning tree $T$ of $\mathsf{G}$ rooted at $x\in V$ is a cycle free spanning subgraph of $\mathsf{G}$ such that for every $y\in V\setminus\{x\}$, there exists a unique directed path from $y$ to $x$ in $T$. We denote the set of spanning trees of $\mathsf{G}$ rooted at $x\in V$ by $\mathsf{TREE}_x(\mathsf{G})$. For $T\in \mathsf{TREE}_x(\mathsf{G})$, we define its weight to be
\begin{align*}
\psi(T)=\prod_{(y,z)\in T}q_y(z),
\end{align*}
where $q_y$ is the unique stationary distribution of the local chain $L^y$ at $y\in V$ with state space $S_y$.  
\begin{defn}[\textcolor{black}{q-weighted} spanning tree]
A random spanning tree $\mathcal{T}$ with distribution proportional to its weight $\psi(T)$, i.e.~$\mathbb{P}(\mathcal{T}=T)\sim\psi(T)$,
where $T$ is a spanning tree of $\mathsf{G}$, is called \textcolor{black}{q-weighted} spanning tree of $\mathsf{G}$.
\end{defn}
Here $\mathbb{P}(\mathcal{T}=T)\sim\psi(T)$ means that there exists a normalizing constant $Z\in\R$ such that
$\mathbb{P}(\mathcal{T}=T)=\frac{\psi(T)}{Z},$
 and this notation is used for the rest of the paper. The  constant $Z$ is defined as
\begin{align}\label{eq:normalizing}
Z=\sum_{x\in V}\sum_{T\in\mathsf{TREE}_x(G)}\psi(T).
\end{align}
A spanning unicycle $U$ of $\mathsf{G}$ is a spanning subgraph of $\mathsf{G}$ which contains exactly one directed cycle and for every vertex $x\in V$ there exists a unique path from $x$ to this cycle. Let us denote by $\mathsf{CYC}(\mathsf{G})$ the set of spanning unicycles of $\mathsf{G}$. Similarly as for trees, we define for $U\in\mathsf{CYC}(\mathsf{G})$ its weight as
\begin{align*}
\psi(U)=\prod_{(y,z)\in U}q_y(z).
\end{align*}
\begin{defn}[\textcolor{black}{q-weighted} spanning unicycle]
A random spanning unicycle $\mathcal{U}$ with distribution
$\mathbb{P}(\mathcal{U}=U)\sim\psi(U)$,
for all $U$ spanning unicycles of $\mathsf{G}$, is called \textcolor{black}{q-weighted} spanning unicycle of $\mathsf{G}$.
\end{defn}

Given a spanning tree $T$ rooted at a vertex $x \in V$, a spanning unicycle $U$ can be obtained by adding any outgoing edge from $x$ to the tree $T$.
For a rotor walk on a graph $G$, the recurrent configurations of the total chain are the spanning unicycles of $G$ in view of \cite[Theorem 3.8]{lev-hol-propp:rotor}. This is also true for the total chain associated to a locally Markov walk, whose local chains $\{L^x:\ x\in V\}$ are irreducible and aperiodic and the proof is similar to the rotor walk case. When saying that a configuration $\rho:V\rightarrow V$ is a spanning unicycle of the graph $G$, we mean that the set of edges $\{(x,\rho(x)):x\in V\}$ forms a spanning unicycle of $G$.

\begin{lem}\label{lem:rec_involution}
Consider  a locally Markov walk $(X_n)_{n\in\N}$ with corresponding total chain $(X_n,\rho_n)_{n\in\N}$ and suppose that the graph $\mathsf{G}$ induced by $(X_n)_{n\in\N}$ is strongly connected. For any $n\in \N$, if $\rho_n$ is a spanning unicycle and $X_n$ lies on the unique cycle of $\rho_n$, then almost surely $\rho_{n+1}$ is a spanning unicycle and $X_{n+1}$ lies on the unique cycle of $\rho_{n+1}$.
\end{lem}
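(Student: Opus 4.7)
The plan is to argue directly from the functional-graph structure. Write $x = X_n$ and $y = X_{n+1}$. By the total-chain update rule, $\rho_{n+1}$ agrees with $\rho_n$ everywhere except at $x$, where the outgoing edge changes from $(x,\rho_n(x))$ to $(x,y)$; moreover $y \in S_x$ almost surely since the local chain at $x$ is supported on $S_x$. So the edge set of $\rho_{n+1}$ is obtained from that of $\rho_n$ by deleting one edge out of $x$ and inserting another.

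First I would observe the key combinatorial point: since $X_n$ lies on the unique cycle $C$ of $\rho_n$, the outgoing edge $(x,\rho_n(x))$ of $x$ in the functional graph of $\rho_n$ is exactly the next edge of $C$ at $x$. Deleting it therefore breaks the only cycle of the unicycle $\rho_n$. The resulting edge set has every vertex of out-degree $1$ except $x$, which has out-degree $0$, and it contains no cycle; since the graph is finite, following out-edges from any vertex must terminate, and the only possible terminus is $x$. Hence this intermediate configuration is a spanning in-tree $T$ rooted at $x$, in which every vertex $v \neq x$ has a unique directed path to $x$.

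Next I would add back the edge $(x,y)$ to form $\rho_{n+1}$ and describe the resulting cycle explicitly. In $T$ the vertex $y$ has a unique path $y = v_0 \to v_1 \to \cdots \to v_k = x$ to the root, and adjoining the edge $(x,y)$ closes this path into a directed cycle $x \to y \to v_1 \to \cdots \to x$. Every other vertex still reaches $x$ through $T$ and hence reaches this cycle, and no additional cycle can be created because $T$ was acyclic and only one new edge is added. Therefore $\rho_{n+1}$ is a spanning unicycle, and by construction $y = X_{n+1}$ lies on its unique cycle.

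I do not expect a serious obstacle here; the only care needed is in justifying that the deletion step produces an in-tree (rather than just a forest), which is exactly where the hypothesis that $X_n$ lies on the cycle of $\rho_n$ gets used, and in noting that the ``almost surely'' qualifier only reflects the restriction $y \in S_x$, which holds with probability one by the definition of the local chain at $x$.
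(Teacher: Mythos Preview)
Your proof is correct and follows essentially the same approach as the paper's: both argue that deleting the out-edge at $X_n$ destroys the unique cycle (using precisely that $X_n$ lies on it), and that re-inserting the new out-edge $(X_n,X_{n+1})$ creates exactly one cycle containing $X_{n+1}$. The only difference is presentational: you construct the new cycle explicitly via the tree path from $y$ back to $x$, whereas the paper argues non-constructively that the functional graph $\rho_{n+1}$ must contain some cycle (every vertex has out-degree~$1$) and any such cycle must use the edge $(X_n,\rho_{n+1}(X_n))$, forcing uniqueness and putting $X_{n+1}$ on it.
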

\begin{proof}
Since $\rho_n$ is a unicycle configuration, the set of edges $\{\rho_n(x)\}_{x\neq X_n}=\{\rho_{n+1}(x)\}_{x\neq X_n}$ contains no directed cycles. However, in the configuration $\rho_{n+1}$ every vertex has outdegree $1$, hence it must contain a directed cycle. But any such cycle must contain the edge $\rho_{n+1}(X_n)$, hence this cycle is unique and $X_{n+1}=\rho_{n+1}(X_n)$ lies on it.
\end{proof}

\textbf{The time reversal of the total chain $(X_n,\rho_n)$} is denoted by $(\hat{X}_n,\hat{\rho}_n)_{n\in\N}$ and it is the Markov chain with transition probabilities given by
\begin{align*}
\hat{p}((x,\rho),(y,\eta))=\hat{\mathsf{m}}_y(\rho(y),x),
\end{align*}
for $\rho\triangle\eta=\{y\}$ and $y$ is the predecessor of $x$ in the unique cycle in $\rho$, and $\widehat{\msf}_y$ is the time reversal of the local chain at $y$ given by
\begin{align} \label{eq:tr-pb-reversed}
\widehat{\msf}_y(z,\textcolor{black}{x}):=\msf_y(x,z)\frac{q_y(x)}{q_y(z)},
\end{align}
for all $x,z\in N_y$. The reversed chain encodes a particle moving "backwards" in time, i.e.~if the location of the walk $(\hat{X}_n,\hat{\rho}_n)_{n\in\N}$ at time $n$ is $(x,\rho)$, 
where $\rho$ is a spanning unicycle and $x$ lies on the unique cycle of $\rho$, then at time $n+1$ the reversed walk
will move the particle from $x$ backwards  along the unique cycle of $\rho$ and then the configuration at the new position $y$ will be changed according to the transition probabilities defined in Equation \eqref{eq:tr-pb-reversed}.

\begin{rem}\normalfont
Under Assumptions \ref{assu}, the locally Markov walk  will eventually visit every vertex of $G$. Once all vertices have been visited, the configuration $\rho$ forms a spanning unicycle. Consequently, the recurrent configurations of the total chain are contained in the set of spanning unicycles. Therefore, it is sufficient to define the time reversal of the total chain only on this set.
\end{rem}

\begin{lem}\label{lem:rec_involution_reversal}
Let $(\hat{X}_n,\hat{\rho}_n)_{n\in\N}$ be the time reversal of the total chain $(X_n,\rho_n)_{n\in\N}$ associated to the local walk $(X_n)_{n\in\N}$.  If $\hat{\rho}_n$ is a spanning unicycle and  $\hat{X}_n$ lies on the unique cycle of $\hat{\rho}_n$, for  some $n\in\N$, then almost surely $\hat{\rho}_{n+1}$ is a spanning unicycle and $\hat{X}_{n+1}$ lies on the unique cycle of $\hat{\rho}_{n+1}$.
\end{lem}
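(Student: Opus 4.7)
The plan is to follow a structure parallel to the forward case (Lemma \ref{lem:rec_involution}), but to show that removing the in-cycle edge at $y$ first yields a spanning tree rooted at $y$, after which inserting any allowed edge from $y$ restores a spanning unicycle whose unique cycle passes through $y$.

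First I would unpack the reversed step. Writing $\rho=\hat\rho_n$ and $x=\hat X_n$, by hypothesis $\rho$ is a spanning unicycle and $x$ lies on its unique cycle $C$. Let $y$ be the predecessor of $x$ in $C$, so $\rho(y)=x$. The reversed transition rule produces $(\hat X_{n+1},\hat\rho_{n+1})=(y,\eta)$ with $\rho\triangle\eta=\{y\}$ and $\eta(y)=z$ for some $z\in S_y$ sampled from the reversed local chain at $y$. Thus $\eta$ is obtained from $\rho$ by deleting the directed edge $(y,x)$ and inserting a new directed edge $(y,z)$.

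The key combinatorial step is to analyze the intermediate configuration $\rho'$ consisting of the edges $\{(w,\rho(w)) : w\in V\setminus\{y\}\}$. I would argue that $\rho'$ is a spanning in-tree directed towards $y$. It has $m-1$ edges; every vertex other than $y$ has out-degree $1$ in $\rho'$, while $y$ has out-degree $0$. Any directed cycle of $\rho'$ would also be a directed cycle of $\rho$ and would therefore coincide with $C$, but $C$ has been broken by the removal of $(y,x)$ -- a contradiction. Hence $\rho'$ is acyclic, and an acyclic spanning subgraph on $m$ vertices with $m-1$ edges in which precisely one vertex ($y$) has out-degree zero is exactly a spanning tree whose edges are oriented towards the root $y$.

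Finally I would add the sampled edge $(y,z)$ back to obtain $\eta$. Every vertex has out-degree $1$ in $\eta$ and the number of edges equals $m$, so $\eta$ must contain at least one directed cycle. Following out-edges from $z$ inside $\rho'$ produces the unique directed path from $z$ to $y$, and concatenating this path with the new edge $(y,z)$ yields a directed cycle through $y$; by the out-degree-$1$ property this cycle is the unique one in $\eta$. Therefore $\hat\rho_{n+1}=\eta$ is a spanning unicycle and $\hat X_{n+1}=y$ lies on its unique cycle, as required. The main subtlety here is not the combinatorics, which mirrors Lemma \ref{lem:rec_involution}, but rather correctly interpreting the reversed transition -- in particular, recognizing that the edge deleted from $\rho$ is the cycle edge entering $x$ (namely $(y,x)$), not an outgoing edge from $x$ -- after which the argument reduces to a standard in-tree-unicycle exchange at the vertex $y$.
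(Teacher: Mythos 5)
Your proof is correct and is essentially the paper's argument in constructive form: both hinge on the single observation that $\hat{\rho}_{n+1}$ differs from $\hat{\rho}_n$ only at $\hat{X}_{n+1}$, which lies on the unique cycle of $\hat{\rho}_n$ (being the cycle-predecessor of $\hat{X}_n$). The paper phrases this as a two-line contradiction — any directed cycle of $\hat{\rho}_{n+1}$ avoiding $\hat{X}_{n+1}$ would already be a cycle of $\hat{\rho}_n$ missing $\hat{X}_{n+1}$ — while you spell out the same exchange explicitly via the intermediate spanning in-tree rooted at $y$, which is a valid (slightly longer) rendering of the same idea.
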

\begin{proof}
It suffices to show that every directed cycle in $\hat{\rho}_{n+1}$ passes through $\hat{X}_{n+1}$. Suppose the contrary holds, that is, there is a directed cycle in $\hat{\rho}_{n+1}$ which does not pass through $\hat{X}_{n+1}$. Since $\hat{\rho}_{n+1}$ agrees with $\hat{\rho}_n$ everywhere except at $\hat{X}_{n+1}$, this same cycle also occurs in $\hat{\rho}_n$ and avoids $\hat{X}_{n+1}$. This is a contradiction, since the unique cycle in $\hat{\rho}_n$ passes through $\hat{X}_{n+1}$, and the claim follows.\end{proof}

The $n$-step transition probabilities $p^{(n)}$ and $\hat{p}^{(n)}$ of the total chain $(X_n,\rho_n)_{n\in\N}$ and of the reversed chain $(\hat{X}_n,\hat{\rho}_n)_{n\in\N}$ respectively, are related as follows.

\begin{lem}\label{lem:same_rec}
For all $n\in\N$ and all $(x,\rho),(y,\eta)\in \mathcal{S}$ we have $p^{(n)}((x,\rho),(y,\eta))>0$ if and only if $\hat{p}^{(n)}((x,\eta),(y,\rho))>0$.
\end{lem}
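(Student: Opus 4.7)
I would argue by induction on $n$, using the Chapman--Kolmogorov decomposition to reduce the $n$-step statement to a one-step statement that can be checked directly.

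The base case $n=0$ is immediate, since both $p^{(0)}((x,\rho),(y,\eta))$ and $\hat p^{(0)}((x,\eta),(y,\rho))$ are positive precisely when $x=y$ and $\rho=\eta$. For $n=1$, I would unfold the definitions. A forward step $p((x,\rho),(y,\eta))>0$ requires that $\rho$ and $\eta$ agree everywhere except at a single vertex whose outgoing edge is updated, and that a specific local transition $\msf_v(a,b)$ is positive. The reverse step $\hat p((x,\eta),(y,\rho))>0$ reduces analogously to the positivity of a local reverse transition $\hat\msf_v(c,d)$. By the defining identity \eqref{eq:tr-pb-reversed} and the strict positivity of the stationary vectors $q_v$ (guaranteed by irreducibility of each local chain), we have $\hat\msf_v(c,d)>0$ iff $\msf_v(d,c)>0$. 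Matching the combinatorial side-conditions (which vertex's outgoing edge is changed, and the cycle-predecessor relation) yields the one-step equivalence.

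For the inductive step, apply Chapman--Kolmogorov on both sides:
\[
p^{(n+1)}((x,\rho),(y,\eta))=\sum_{(z,\zeta)}p((x,\rho),(z,\zeta))\,p^{(n)}((z,\zeta),(y,\eta)),
\]
and similarly for $\hat p^{(n+1)}((x,\eta),(y,\rho))$. Positivity of the left-hand side is equivalent to the existence of an intermediate state $(z,\zeta)$ with both factors positive. The base case and the inductive hypothesis then translate each factor into the positivity of a corresponding $\hat p$-transition; the task is to reassemble these into a single intermediate state witnessing the positivity of $\hat p^{(n+1)}((x,\eta),(y,\rho))$.

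The main obstacle is that the swap of $\rho$ and $\eta$ in the statement prevents a naive concatenation of the two translated factors, since the middle configuration appears in different roles on the forward and reverse sides. I would resolve this by an explicit trajectory-reversal construction: given a positive-probability forward trajectory $(x_0,\rho_0)=(x,\rho)\to(x_1,\rho_1)\to\cdots\to(x_n,\rho_n)=(y,\eta)$, build a candidate reverse trajectory starting at $(x,\eta)$ and ending at $(y,\rho)$ whose configurations are obtained from the $\rho_k$ by a time-reversed relabeling, and verify that each constructed step is a valid reverse step of positive probability. The validity of each step uses the identity $\hat\msf_v(z,x)=\msf_v(x,z)\,q_v(x)/q_v(z)$ to transfer positivity of local forward transitions to the corresponding reverse ones, while Lemmas \ref{lem:rec_involution} and \ref{lem:rec_involution_reversal} ensure that the particle stays on the unique cycle at each step so that the reverse chain is well-defined along the constructed trajectory. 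The reverse direction of the biconditional follows by the symmetric construction applied to a reverse trajectory.
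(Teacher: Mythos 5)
Your skeleton (check $n=1$ from the definitions, then induct) is the same as the paper's one-line proof, but the step you assert at the heart of it fails: for the pairing written in the statement, the "combinatorial side-conditions" do \emph{not} match. A forward step $(x,\rho)\to(y,\eta)$ changes the configuration at the \emph{source}, so $\rho\triangle\eta=\{x\}$ and $\eta(x)=y$; by the definition of the reversed chain, a transition from $(x,\eta)$ into $(y,\rho)$ would instead require $\eta$ and $\rho$ to differ only at the \emph{destination} $y$, with $y$ the cycle-predecessor of $x$ in $\eta$. These requirements are incompatible as soon as $x\neq y$ and the arrow at $x$ genuinely moves. Concretely, on $K_3$ take $\rho(1)=2$, $\rho(2)=1$, $\rho(3)=1$, $x=1$, and let the walker exit to $y=3$, so $\eta(1)=3$ and $\eta=\rho$ elsewhere: then $p((1,\rho),(3,\eta))=\msf_1(2,3)>0$, while $\hat p((1,\eta),(3,\rho))=0$ because $\eta$ and $\rho$ differ at vertex $1$, not at vertex $3$. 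So the one-step equivalence in the form you claim (and in the form the lemma is printed) is false; the relation that is true --- and the one the paper actually invokes in the proof of Proposition \ref{prop:unicycle->rec} --- exchanges \emph{both} coordinates: $p^{(n)}((x,\rho),(y,\eta))>0$ iff $\hat p^{(n)}((y,\eta),(x,\rho))>0$ (read on the states $\mathsf{U}_G$ where $\hat P$ is defined). The index pattern in the displayed statement is a misprint, and the "main obstacle" you ran into in the inductive step is precisely the symptom of it.

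Once the endpoints are exchanged correctly, the proof collapses to the short argument the paper gestures at, and your trajectory-reversal relabeling is unnecessary (and cannot be made to work for the literal statement, which already fails at $n=1$). For a single step, $p((u,\sigma),(v,\tau))>0$ holds iff $\sigma\triangle\tau=\{u\}$, $\tau(u)=v$ and $\msf_u(\sigma(u),v)>0$; by \eqref{eq:tr-pb-reversed} and strict positivity of the stationary vectors $q_u$ on $S_u$, this is equivalent to $\hat p((v,\tau),(u,\sigma))>0$, since the reverse step sends $v$ to its cycle-predecessor $u$ in $\tau$ and resets the arrow at $u$ to $\sigma(u)$. The induction is then the trivial concatenation: a positive-probability forward trajectory $(x,\rho)=(x_0,\rho_0)\to\cdots\to(x_n,\rho_n)=(y,\eta)$ reverses step by step into a positive-probability trajectory of $\hat P$ from $(y,\eta)$ to $(x,\rho)$, and symmetrically in the other direction; Lemma \ref{lem:rec_involution} only enters to guarantee that all visited states remain in $\mathsf{U}_G$ so that each reverse step is defined. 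You deserve credit for noticing that the middle configuration "appears in different roles" --- that observation is exactly what should have led you to question the stated pairing rather than to a relabeling construction.
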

\begin{proof}
The base case $n=1$ follows from the definition of the time reversal chain and the general case follows easily by induction.
\end{proof}
\begin{prop}\label{prop:unicycle->rec}
Consider a spanning unicycle $\rho$ of $\mathsf{G}$ and a vertex $x\in V$ that lies on the unique cycle of $\rho$. Then $(x,\rho)$ is a recurrent state of the total chain $(X_n,\rho_n)_{n\in\N}$.
\end{prop}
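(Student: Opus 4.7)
Let $U := \{(y,\eta)\in\mathcal{S}:\eta\text{ is a spanning unicycle of }\mathsf{G} \text{ and } y\text{ lies on the unique directed cycle of }\eta\}$. By hypothesis $(x,\rho)\in U$, and by Lemma \ref{lem:rec_involution} the set $U$ is forward-invariant under the total chain: once the walk enters $U$, it remains in $U$ forever. Since $U$ is a finite subset of the finite state space $\mathcal{S}$, showing that $(x,\rho)$ is recurrent is equivalent to showing that $(x,\rho)$ lies in a closed communicating class of the forward chain restricted to $U$. Unpacking this, the task is: for every $(y,\eta)\in U$ with $p^{(n)}((x,\rho),(y,\eta))>0$ for some $n$, exhibit $m$ with $p^{(m)}((y,\eta),(x,\rho))>0$.

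The main device for constructing such a return path is Lemma \ref{lem:same_rec}. Given the assumed forward positivity $p^{(n)}((x,\rho),(y,\eta))>0$, Lemma \ref{lem:same_rec} yields a positive-probability reverse-chain path from $(x,\eta)$ to $(y,\rho)$. Symmetrically, the existence of a forward return $(y,\eta)\to(x,\rho)$ is equivalent, again by Lemma \ref{lem:same_rec}, to the existence of a positive-probability reverse-chain path from $(y,\rho)$ to $(x,\eta)$. Thus the proposition reduces to producing this second reverse trajectory.

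To build the reverse path from $(y,\rho)$ to $(x,\eta)$ I would stay inside $U$: Lemma \ref{lem:rec_involution_reversal} guarantees that once the reversed walk is in $U$ it stays there, so all intermediate states can be chosen in $U$. The two standing assumptions then supply the needed flexibility: strong connectivity of $\mathsf{G}$ allows one to traverse cycles backward so as to bring the reversed walker to any desired vertex, while irreducibility (and aperiodicity) of each local chain $L^z$ permits tuning $\eta(z)$ toward any prescribed value after $z$ has been visited enough times. Iterating over those vertices where the current configuration differs from the target, one assembles a positive-probability reverse trajectory from $(y,\rho)$ to $(x,\eta)$. A final application of Lemma \ref{lem:same_rec} then converts this reverse path into a forward path from $(y,\eta)$ back to $(x,\rho)$, establishing that $(x,\rho)$ and $(y,\eta)$ communicate and completing the proof.

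The principal obstacle is the explicit combinatorial construction in the previous paragraph: because the reversed walker is constrained to move backward along the \emph{current} cycle rather than freely along edges of $\mathsf{G}$, reshaping the configuration at a specific vertex $z$ requires first routing the cycle through $z$ and only then invoking local-chain irreducibility to modify $\eta(z)$. Organising this reconfiguration for every vertex without destroying the unicycle property at any intermediate step — i.e., keeping the reversed chain in $U$ throughout — is the technical heart of the argument.
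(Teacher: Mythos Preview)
Your approach differs substantially from the paper's, and the route you chose is considerably harder than necessary. The paper's argument is a short abstract trick that completely sidesteps the explicit construction you identify as the ``principal obstacle.''

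The paper argues as follows. Starting the total chain at $(x,\rho)$, it must eventually reach \emph{some} recurrent state $(y,\eta)$, simply because the state space is finite. By Lemma~\ref{lem:rec_involution}, $(y,\eta)\in U$. Lemma~\ref{lem:same_rec} is then invoked in the form $p^{(n)}((x,\rho),(y,\eta))>0 \Rightarrow \hat p^{(n)}((y,\eta),(x,\rho))>0$, and the same lemma yields that the forward and reversed chains share their recurrent states, so $(y,\eta)$ is recurrent for the reversal as well. A recurrent state of the reversal that reaches $(x,\rho)$ with positive probability forces $(x,\rho)$ to lie in its recurrence class; hence $(x,\rho)$ is recurrent for the reversal and therefore for the forward chain. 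No path construction whatsoever is needed.

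The idea you are missing is that you need not show that \emph{every} state reachable from $(x,\rho)$ can return; it suffices to exhibit \emph{one} reachable state that is already known to be recurrent, and finiteness of the state space hands you such a state for free. Your plan to prove directly that the communicating class of $(x,\rho)$ is closed is correct in spirit but does far more work than required.

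There is also a genuine gap in your reduction. The auxiliary states $(y,\rho)$ and $(x,\eta)$ you extract from your reading of Lemma~\ref{lem:same_rec} need not lie in $U$: there is no reason $y$ should sit on the unique cycle of $\rho$, nor $x$ on that of $\eta$. Since the reversed chain is only defined on unicycle states with the particle on the cycle (see the remark preceding Lemma~\ref{lem:rec_involution_reversal}), your plan to build a reverse trajectory from $(y,\rho)$ while ``staying inside $U$'' cannot even get started. The paper's argument avoids this issue because it works directly with the states $(y,\eta)$ and $(x,\rho)$, both of which lie in $U$ by construction.
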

\begin{proof}
By Lemma \ref{lem:same_rec}, the total chain and its time reversal have the same recurrent configurations. The total chain starting from $(x,\rho)$ must eventually reach a recurrent state $(y,\eta)$. By Lemma \ref{lem:rec_involution} the configuration $\eta$ must be a spanning unicycle and $y$ must lie on its unique cycle. Thus there exists $n\in\N$ such that $\hat{p}^{(n)}((y,\eta),(x,\rho))>0$, and since $(y,\eta)$ is recurrent for the total chain and thus also for its reversal, this implies that $(x,\rho)$ is recurrent for the time reversal and thus also the total chain, completing the proof.
\end{proof}
It remains to show that all recurrent configurations of the total chain are spanning unicycles.
\begin{prop}\label{prop:rec->unicycle}
Let $(x,\rho)$ be a recurrent configuration of the total chain $(X_n,\rho_n)_{n\in\N}$. Then $\rho$ is a spanning unicycle of $\mathsf{G}$ and $x$ lies on its unique cycle.
\end{prop}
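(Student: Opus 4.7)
Define $R^* := \{(y, \eta) \in \mathcal{S} : \eta \text{ is a spanning unicycle of } \mathsf{G} \text{ and } y \text{ lies on its unique cycle}\}$. The plan is to show (a) $R^*$ is absorbing under the total chain, and (b) the total chain almost surely reaches $R^*$ from every initial state. From (a) and (b) the proposition follows at once: if $(x,\rho)$ were recurrent but not in $R^*$, then almost surely the chain started at $(x,\rho)$ would enter $R^*$ at some finite time $\tau$ and, by absorption, never return to $(x,\rho)$, contradicting recurrence.

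Claim (a) is immediate by induction from Lemma \ref{lem:rec_involution}: once $(X_n,\rho_n)\in R^*$, almost surely $(X_m,\rho_m)\in R^*$ for every $m\geq n$. For claim (b), because $\mathcal{S}$ is finite it suffices to exhibit, for each initial state $(x,\rho)$, a finite sequence of positive-probability transitions ending in $R^*$; the strong Markov property together with finiteness of $\mathcal{S}$ then upgrades this to almost sure hitting in finite time. Fix any spanning unicycle $U\in\mathsf{CYC}(\mathsf{G})$ (which exists because $\mathsf{G}$ is strongly connected), denote its cycle by $C$, and pick $c_0\in C$. Using strong connectivity of $\mathsf{G}$ together with irreducibility of each local chain, we steer the walker along a positive-probability trajectory that visits each vertex $v\in V$ enough times for its last exit to be set to $U(v)$, and that terminates at $c_0$; at that moment, the total state equals $(c_0,U)\in R^*$.

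The technical core is the explicit steering in claim (b): the walker's motion and the local-chain updates at each vertex are coupled, so prescribing the terminal configuration to equal $U$ requires care. One concrete way is to process the vertices of $V$ in an order compatible with the tree structure of $U$ (leaves first, cycle vertices last); for each vertex $v$, use multiple visits --- whose existence is guaranteed by strong connectivity of $\mathsf{G}$ --- to drive the local chain $L^v$, by its irreducibility, from its current state to $U(v)$, and then avoid returning to $v$ while handling later vertices. A final segment along $C$ brings the walker to $c_0$ while keeping the already-prescribed exits at cycle vertices intact with positive probability, again by irreducibility of each $L^c$ for $c\in C$.
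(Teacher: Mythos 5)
Your reduction is fine: with $R^*$ the set of states $(y,\eta)$ where $\eta$ is a spanning unicycle and $y$ lies on its cycle, Lemma \ref{lem:rec_involution} does make $R^*$ absorbing, and if the chain almost surely hits $R^*$ from every initial state, no recurrent state can lie outside $R^*$. The genuine gap is in your proof of claim (b), which you yourself flag as the technical core. The steering construction is not justified under the standing assumptions (local chains irreducible and aperiodic, $\mathsf{G}$ strongly connected). First, moving the walker "along" a chosen route costs one step of the local chain at every intermediate vertex, and irreducibility does not make any particular one-step exit available: if $\mathsf{m}_v(a,a)=0$, a walker sitting at $v$ whose last exit was to $a$ is forced to leave toward some other neighbour, so you cannot in general "avoid returning to" an already-processed vertex, and cut vertices of $\mathsf{G}$ cannot be avoided at all; each unwanted revisit re-randomizes a last exit you had already set to $U(v)$. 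Second, reaching a \emph{prescribed} state $(c_0,U)$ from an arbitrary initial state is essentially the irreducibility of the total chain, which the paper only proves later under the much stronger hypothesis that all local transition probabilities are strictly positive (and even the stated relaxation needs $\mathsf{m}_x(y,y)>0$); it is not available here, and your leaves-first ordering does not substitute for it.

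The missing idea is that you do not need to prescribe the terminal configuration at all. Almost surely the walk exits every vertex, and at any time by which every vertex other than the current position has been exited, the configuration is \emph{automatically} in $R^*$: for each such vertex $y$, following the arrow $\rho(y)$ leads to a vertex whose last exit occurred strictly later, so last-exit times increase strictly along arrows until the current position is reached; hence every directed cycle of $\rho$ passes through the walker, and since every vertex has outdegree $1$ there is exactly one cycle, with the walker on it. This last-exit-time argument is exactly the paper's proof: it starts the chain at the recurrent state $(x,\rho)$, waits until all vertices have been visited and the chain has returned to $(x,\rho)$, and then derives a contradiction from any cycle of $\rho$ avoiding $x$, since the edges around such a cycle would have to be traversed at strictly increasing last times. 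If you replace your steering construction by this observation, your absorbing-set framework closes up and in fact reproduces the paper's argument.
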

\begin{proof}
Since the local chains $\{L^x,\ x\in V\}$ are irreducible and aperiodic, we eventually visit every vertex of $G$ almost surely. Assume that we start at $(x,\rho)$, visit all the vertices of $G$ and eventually return to $(x,\rho)$. Since every vertex in $\rho$ has outdegree $1$, it must contain at least one directed cycle. Choose one of these cycles and denote its vertices by $x_1,...,x_k$. Let us assume $x$ does not lie on the chosen cycle. For each $i$, the last time the walker was at $x_i$, it moved to $x_{i+1}$ and hence the edge $(x_i,x_{i+1})$ was traversed more recently than the edge $(x_{i-1},x_i)$. Carrying this argument around the cycle leads to a contradiction. Thus every cycle in $\rho$ must go through the vertex $x$, which implies that $\rho$ is a spanning unicycle and $x$ lies on its unique cycle.
\end{proof}
Combining Proposition \ref{prop:unicycle->rec} and Proposition \ref{prop:rec->unicycle} yields the following characterization of the recurrent states of the total chain.

\begin{prop}\label{prop:recurrentstates}
Given a locally Markov walk $(X_n)_{n\in\N}$ with associated total chain $(X_n,\rho_n)_{n\in\N}$, the recurrent states of the total chain are the configurations of the form $(x,\rho)$, where $\rho$ is a spanning unicycle of $\mathsf{G}$ and $x$ lies on the unique cycle of $\rho$.
\end{prop}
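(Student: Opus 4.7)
The plan is to simply combine the two preceding propositions, since together they establish the two inclusions of the set equality asserted in the statement.

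First, I would invoke Proposition \ref{prop:unicycle->rec} for the ``easy'' direction: given any spanning unicycle $\rho$ of $\mathsf{G}$ and any vertex $x\in V$ lying on the unique directed cycle of $\rho$, the configuration $(x,\rho)$ is recurrent for the total chain $(X_n,\rho_n)_{n\in\N}$. This yields the inclusion of the set of ``spanning unicycle configurations rooted on the cycle'' into the set of recurrent states.

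Second, I would invoke Proposition \ref{prop:rec->unicycle} for the converse inclusion: any recurrent configuration $(x,\rho)$ must have $\rho$ a spanning unicycle of $\mathsf{G}$ with $x$ on its unique cycle. Combining these two inclusions yields the claimed characterization.

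There is no genuine obstacle here: all the substance has been carried out in the two previous propositions (which in turn relied on Lemmas \ref{lem:rec_involution}, \ref{lem:rec_involution_reversal} and \ref{lem:same_rec} for the invariance of unicyclicity under the forward and reversed dynamics and for the coincidence of recurrent classes of $P$ and $\hat{P}$). The only sanity check is to confirm that configurations $(x,\rho)$ in which $\rho$ happens to be a spanning unicycle but $x$ does not lie on its unique cycle are not recurrent --- this is implicit in Proposition \ref{prop:rec->unicycle}, which requires any recurrent configuration to have the root on the unique cycle.
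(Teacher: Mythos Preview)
Your proposal is correct and matches the paper's own approach exactly: the paper simply states that combining Propositions \ref{prop:unicycle->rec} and \ref{prop:rec->unicycle} yields the characterization, with no additional argument.
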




\begin{defn}[Set of unicycle configurations]
For a finite and \textcolor{black}{strongly connected} graph $G$, we define the set of unicycle configurations as 
\begin{align*}\mathsf{U}_G:=\{(x,\rho):\rho\in\mathsf{CYC}(G)\text{ and $x$ lies on the unique cycle in $\rho$}\}.\end{align*}
\end{defn}\label{def:unic}

\begin{rem}\normalfont
At first glance, it may seem plausible that the time reversal of the total chain associated to a locally Markov walk could itself be represented as the total chain associated to another locally Markov walk. However, this is not the case. In the reversed process, the particle moves backwards along the unique cycle of the current unicycle configuration. Determining the next step therefore requires knowledge of the entire cycle structure, not merely the most recent action at the current vertex. Consequently, the particle’s motion is not locally Markovian, and the time-reversed process does not constitute a locally Markov walk.
\end{rem}

\subsubsection*{Single step decomposition of the total chain}\label{sec:single-step-decomp}

Next we present a decomposition of the transition matrix of the total chain into a block-diagonal component and an orthogonal component. This relies on the observation that a single step of the total chain consists of two micro-steps: for a particle located at $x$, the first step involves examining the previous action performed at $x$ and determining the next move based on the local chain $L^x$. The second step involves moving the particle to the randomly chosen vertex: if the last exit from  $x$ was to $y$, then the particle at $x$ moves to $z$ with probability $\mathsf{m}_x(y,z)$. This motivates us to decompose the matrix $P$ into a block diagonal matrix $B_{\text{loc}}$, whose blocks correspond to the transition matrices of the local chains, and a permutation matrix $A_{\mathsf{CYC}}$, such that $P=B_{\text{loc}} A_{\mathsf{CYC}}$.
\begin{defn}[Unicycle permutation matrix]\label{def:perm_matrix}
For a finite, directed, and \textcolor{black}{strongly connected} graph $G$, we define the unicycle permutation of $G$ as the mapping $\zeta:\mathsf{U}_G\rightarrow\mathsf{U}_G$ given by
$\zeta(x,\rho)=(\rho(x),\rho)$.
Let also $A_{\mathsf{CYC}}\in\{0,1\}^{\mathsf{U}_G\times\mathsf{U}_G}$ be the matrix with entries
\begin{align*}A_{\mathsf{CYC}}((x,\rho),(y,\eta))=\begin{cases}1,&\zeta(x,\rho)=(y,\eta)\\0,&\text{else}\end{cases},\end{align*}
and we call $A_{\mathsf{CYC}}$ the unicycle permutation matrix of $G$.
\end{defn}
By definition, $\zeta$ moves the particle along the unique cycle in $\rho$ while keeping the directions of the edges unchanged. 
The set $\mathsf{U}_G$ represents the set of recurrent states of $(X_n,\rho_n)_{n\in\N}$ according to Proposition \ref{prop:recurrentstates}.
The lemma below shows that permuting the columns of the transition matrix $P$ of the total chain according to $A_{\mathsf{CYC}}$ yields a block-diagonal matrix.

\begin{lem}\label{lem:ss-decomp}
For the total chain $(X_n,\rho_n)$ on $G$ with transition matrix $P$, and transition probabilities of the local chains given by $\mathsf{M}_x=(\mathsf{m}_x(y,z))_{\textcolor{black}{y,z\in N_x}}$ for all $x\in V$,  for all $(x,\rho),(y,\eta)\in \mathsf{U}_G$ we have 
\begin{align*}
P A_{\mathsf{CYC}}^T((x,\rho),(y,\eta))=\begin{cases}\mathsf{m}_x(\rho(x),\eta(x)),&x=y\text{ and } \rho\setminus x=\eta\setminus x\\
0,&\text{else}
\end{cases}.
\end{align*}
\end{lem}
\begin{proof}
Let $(x,\rho),(y,\eta)\in\mathsf{U}_G$. A simple computation gives
\begin{align*}
PA_{\mathsf{CYC}}^T((x,\rho),(y,\eta))=p((x,\rho),\zeta(y,\eta)),
\end{align*}
where $\zeta$ is the unicycle permutation from Definition $\ref{def:perm_matrix}$. If we denote $\zeta(y,\eta)=(y',\eta')$, then the right-hand side of the above equation is non-zero if $y'\in N_x$, $\rho\setminus x=\eta'\setminus x$ and $\eta'(x)=y'$, which together with the definition of $\zeta$ proves the claim.
\end{proof}
We define the block diagonal matrix $B_{\text{loc}}$ as the matrix whose blocks are given by the local transition probabilities $\mathsf{m}_x(\cdot,\cdot)$ \textcolor{black}{as follows}:
$$B_{\text{loc}}=PA^T_{\mathsf{CYC}}.$$
\begin{cor}\label{prop:ss-decomp}
For the transition matrix $P$ of the total chain $(X_n,\rho_n)_{n\in\N}$ restricted to its recurrent states $\mathsf{U}_G$ it holds
\begin{align*}P=B_{\text{loc}} A_{\mathsf{CYC}}.\end{align*}
\end{cor}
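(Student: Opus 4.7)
The plan is to observe that this corollary is essentially a one-line consequence of the definition of $B_{\text{loc}}$ combined with the fact that $A_{\mathsf{CYC}}$ is a permutation matrix. The hypothesis work has already been done in Lemma \ref{lem:ss-decomp}, which describes the entries of $PA_{\mathsf{CYC}}^T$ and shows in particular that this product is the block diagonal matrix with the local transition matrices on its diagonal. So what remains is purely algebraic.

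First I would note that by Definition \ref{def:perm_matrix}, $A_{\mathsf{CYC}}$ is the $\{0,1\}$-matrix of the bijection $\zeta: \mathsf{U}_G \to \mathsf{U}_G$. Every permutation matrix is orthogonal, so $A_{\mathsf{CYC}}^T A_{\mathsf{CYC}} = A_{\mathsf{CYC}} A_{\mathsf{CYC}}^T = I$, where $I$ is the identity matrix indexed by $\mathsf{U}_G$. Concretely, the row of $A_{\mathsf{CYC}}$ at $(x,\rho)$ has its unique $1$ in column $\zeta(x,\rho)$, so multiplying $A_{\mathsf{CYC}}^T$ (which is the matrix of $\zeta^{-1}$) on the right undoes the column permutation applied by $A_{\mathsf{CYC}}^T$.

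Then, by the definition $B_{\text{loc}} = PA_{\mathsf{CYC}}^T$ given right before the corollary, I multiply on the right by $A_{\mathsf{CYC}}$ to obtain
\begin{equation*}
B_{\text{loc}}\, A_{\mathsf{CYC}} = (PA_{\mathsf{CYC}}^T)\, A_{\mathsf{CYC}} = P\,(A_{\mathsf{CYC}}^T A_{\mathsf{CYC}}) = P,
\end{equation*}
which is the desired identity. There is no real obstacle here; the content of the decomposition lives entirely in Lemma \ref{lem:ss-decomp}, which verifies that $PA_{\mathsf{CYC}}^T$ has the claimed block diagonal structure in terms of the local transition probabilities $\mathsf{m}_x(\cdot,\cdot)$. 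The corollary just repackages that lemma by undoing the permutation, giving the clean factorization $P = B_{\text{loc}} A_{\mathsf{CYC}}$ which will be useful later for spectral analysis via reversibilisation.
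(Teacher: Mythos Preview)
Your proposal is correct and follows essentially the same approach as the paper: both use that $A_{\mathsf{CYC}}$ is a permutation matrix, hence $A_{\mathsf{CYC}}^T=A_{\mathsf{CYC}}^{-1}$, and then multiply the defining relation $B_{\text{loc}}=PA_{\mathsf{CYC}}^T$ by $A_{\mathsf{CYC}}$ on the right to recover $P$. The only difference is cosmetic---the paper phrases it as multiplying ``the equation'' by $A_{\mathsf{CYC}}^T$ to reduce the corollary to the definition, while you go in the other direction---but the content is identical.
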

This follows immediately from Lemma $\ref{lem:ss-decomp}$ by multiplying the above equation with $A_{\mathsf{CYC}}^T$ from the right, and using that $A^T_{\mathsf{CYC}}=A^{-1}_{\mathsf{CYC}}$.

\textcolor{black}{
We will see in Lemma \ref{lem:ev-constant} that the single-step decomposition provides a partial description of the eigenvectors of the uniform unicycle walk introduced in Section \ref{sec:unicycle-walk}.
}
\subsection{Stationary distribution}\label{sec:st-distr}

We describe here the stationary distribution of the total chain $(X_n,\rho_n)_{n\in\N}$ associated to a locally Markov walk $(X_n)_{n\in\N}$. Recall the definition of the matrix $Q$, whose rows are the stationary distributions of the local chains. For a configuration $\rho$ we define
$\rho\setminus x:=\{(y,\rho(y)):\ y\in V_x\},$
where $x\in V$ and $V_x=V\setminus\{x\}$. That is, $\rho\setminus x$ is the configuration obtained from $\rho$ by removing the outgoing edge from $x$. In view of Proposition \ref{prop:recurrentstates}, the set of recurrent states of the total chain is $\mathsf{U}_G$.
\textcolor{black}{The next result proves (1) in Theorem \ref{thm:erg}.}

\begin{prop}\label{thm:stat-dist} The stationary distribution $\mu: V\times V^V\to [0,1]$ of the total chain $(X_n,\rho_n)_{n\in\N}$ associated to the locally Markov walk $(X_n)_{n\in \N}$ is given by 
\begin{align*}
\mu(x,\rho)=\begin{cases}
\frac{1}{Z}\prod_{y\in V}q_y(\rho(y)),&\text{if }\rho\setminus x\in\mathsf{TREE}_x(G)\\
0,&\text{otherwise}
\end{cases},
\end{align*}
where $Z$ is the normalizing constant from \textcolor{black}{Equation \eqref{eq:normalizing}}.
\end{prop}
\begin{proof}
Assume that $(X_n,\rho_n)$ is $\mu$-distributed, and recall that its transition matrix is $P$, while the transition matrix of the local chain at $x\in V$ is $\mathsf{M}_x=(\mathsf{m}_x(y,z))_{y,z\in N_v}$. Let $(y,\eta)\in \mathsf{U}_G$, and we take another 
unicycle configuration $(x,\rho)\in \mathsf{U}_G$  such that $p((x,\rho),(y,\eta))>0$. Then  $x$ must be a neighbor of $y$. In fact, $x$ must be the unique neighbor of $y$ lying on the cycle in $\eta$ and satisfying $\eta(x)=y$. We denote this neighbor by $x_y$. Moreover, by the update rule of the total chain, we necessarily have  $\rho\setminus x_y=\eta\setminus x_y$. Finally, the value of $\rho$ at $x_y$ can be any of the neighbours of $x_y$. Since $(X_n,\rho_n)$ is $\mu$-distributed, factorizing with respect to the first step yields
\begin{align*}
\mathbb{P}_\mu\Big[(X_{n+1},\rho_{n+1})=(y,\eta)\Big]&=\sum_{(x,\rho)}\mu(x,\rho)p((x,\rho),(y,\eta))\\&=\frac{1}{Z}\prod_{v\in V_{x_y}}q_v(\eta(v)) \sum_{z\in  N_{x_y}}q_{x_y}(z)\mathsf{m}_{x_y}(z,y)=\frac{1}{Z}\prod_{v\in V}q_v(\eta(v))=\mu(y,\eta).
\end{align*}
Since $\mu(y,\eta)>0$ for all unicycle configurations $(\eta,y)\in\mathsf{U}_G$ and $\sum_{(y,\eta)}\mu(y,\eta)=1$,
it follows that $\mu$ is indeed the stationary distribution of $P$.
\end{proof}

\subsubsection*{Irreducibility of the total chain}

We show here that if a locally Markov walk $(X_n)_{n \in \mathbb{N}}$ has strictly positive local transition probabilities, then the corresponding total chain is irreducible. 

\begin{prop}\label{prop:irred}
For a locally Markov walk with strictly positive local transition probabilities — \textcolor{black}{ that is, for all $x\in V$ and all $y,z\in N_x$ we have $\mathsf{m}_x(y,z)>0$} - the corresponding total chain is irreducible.
\end{prop}
\begin{proof}
For $x,y\in V$, using the strict positivity assumption of the transition probabilities, it is clear that the location of the locally Markov walk $(X_n)_{n\in \N}$ can  be transitioned from $x$ to $y$ in a finite number of steps with positive probability. Therefore, it remains to show that the unicycle configuration of the total chain can also be transitioned into any other configuration with the same root in a finite number of steps with positive probability. 
For this, let $(y,\eta),(y,\eta')\in \mathsf{U}_G$, and suppose that $\eta$ and $\eta'$  differ only in a single vertex. We distinguish two cases.

If $\eta(y)\neq\eta'(y)$, then we can change the state of $\eta$ at $y$ by performing the following steps in the total chain: first perform a step from $y$ to $\eta'(y)$, which changes $\eta(y)$ to the desired neighbor $\eta'(y)$. Then from $\eta'(y)$, follow the unique directed path back to $y$. This way, the total chain will have moved from $(y,\eta)$ to $(y,\eta')$, and by the positivity assumption all steps have positive probability.

If, on the other hand, there exists some $x\neq y$ with $\eta(x)\neq\eta'(x)$, we can perform the following sequence of steps: denote the unique directed path from $x$ to $y$ in $\eta$ by $\gamma$ and the path from $\eta'(x)$ to $y$ by $\gamma'$. 
First, we traverse $\gamma$ in reverse order, moving the particle from $y$ to $x$ while reversing the direction of all arrows along $\gamma$.
Next we move from $x$ to $\eta'(x)$. Then we follow $\gamma'$ until we either reach $y$ or we intersect $\gamma$. 
Finally, we retrace $\gamma$ back to the last vertex before $x$, and from there, we follow the original order of $\gamma$ to return to $y$. After restoring the direction of the arrow at $y$ to its original orientation, we will have transitioned from $(y,\eta)$ to $(y,\eta')$ in a finite number of steps, where each step occurs with positive probability by assumption.
The proposition then follows by observing that any given unicycle configuration can be transformed into any other configuration by iteratively adjusting all mismatched arrows.
\end{proof}

The positivity assumption on the local transition probabilities can be slightly weakened.  The proof remains valid if we instead assume that the local chains are irreducible and that, for all $x\in V$ and $y\in N_x$ we have
$\mathsf{m}_x(y,y)>0$.
Under this assumption,  it is still possible to follow paths in the unicycle configuration in their given order, as well as to reverse the order of a given path. However, to provide a more concise formulation of the ergodicity of locally Markov chains, we will continue to use the positivity assumption on the local transition probabilities.

\begin{prop}\label{prop:ergodic}
Let $(X_n)_{n\in\N}$ be a locally Markov walk with local chains $(L^x(k))_{k\in\N}$ with strictly positive transition probabilities, for all $x\in V$. Then for all $f:V\rightarrow\R$ almost surely 
$$\lim_{t\to\infty}\frac{1}{t}\sum_{n=0}^t f(X_n)=\sum_{x\in V}f(x)\pi(x).$$
In particular, for $x\in V$ the proportion of visits to $x$ by $(X_n)_{n\in\N}$ until time $t$ converges to $\pi(x)$ as $t\to\infty$.
\end{prop}
\begin{proof}
If we write $N_t(x)$ for the number of visits to $x\in V$ by the locally Markov walk $(X_n)_{n\in \mathbb{N}}$ up to time $t\in\mathbb{N}$, it suffices to show for all $x\in V$ that, almost surely,
$$\lim_{t\rightarrow\infty}\frac{N_t(x)}{t}=\pi(x).$$
The proof relies on the observation that $X_{t+1}=L^{X_t}(N_t(X_t))$.
Let us now fix some $x\in V$, then
\begin{equation}\label{eq:thm1}
\begin{aligned}
\frac{1}{t}\big(N_t(x)-\delta_{x}(X_0)\big)&=\frac{1}{t}\sum_{n=0}^{t-1}\delta_x(X_{n+1})=\frac{1}{t}\sum_{n=0}^{t-1}\delta_x(L^{X_n}(N_n(X_n))\\
&=\frac{1}{t}\sum_{n=0}^{t-1}\sum_{y\in V}\delta_x(L^{X_n}(N_n(y))\delta_y(X_n)=\frac{1}{t}\sum_{y\in V}\sum_{n=0}^{t-1}\delta_x(L^{X_n}(N_n(y))\delta_y(X_n)\\
&=\frac{1}{t}\sum_{y\in V}\sum_{k=0}^{N_{t-1}(y)}\delta_x(L^y(k))=\sum_{y\in V}\frac{N_{t-1}(y)}{t}\frac{1}{N_{t-1}(y)}\sum_{k=0}^{N_{t-1}(y)}\delta_x(L^y(k)).
\end{aligned}
\end{equation}
Since all local transition probabilities are positive, and the underlying graph $G$ is strongly connected, we have $N_t(y)\rightarrow\infty$ as $t\rightarrow\infty$ for all $y\in V$. Using this, we obtain for all $y\in V$
$$\lim_{t\rightarrow\infty}\frac{1}{N_{t-1}(y)}\sum_{k=0}^{N_{t-1}(y)}\delta_x(L^y(k))=q_y(x).$$
By assumption, the total chain is finite and irreducible, so using the ergodic theorem for finite Markov chains, we have 
$$\lim_{t\rightarrow\infty}\frac{N_{t-1}(y)}{t}=f(y),$$
for some function $f:V\rightarrow(0,\infty)$ and for all $y\in V$. Reinserting this into Equation (\ref{eq:thm1}) and taking the limit as $t\to\infty$ on both sides yields
$$f(x)=\sum_{y\in V} f(y)q_y(x),$$
thus $f$ is an eigenvector of $Q$ with eigenvalue $1$. Since
$\sum_{y\in V}f(y)=1$, together with the uniqueness of the stationary distribution $\pi$ of $Q$ implies $f=\pi$, and this finishes the proof.
\end{proof}

\subsection*{Convergence of the single-step distribution}

According to Proposition \ref{prop:ergodic},  locally Markov walks inherit ergodic properties from the total chain.  In what follows, we establish a non-averaged analogue of Proposition \ref{prop:ergodic}, and we show that, in the long run, the single-step distributions of locally Markov walks resemble those of a Markov chain with transition matrix $Q$. We again assume that all local transition probabilities are strictly positive on $N_x$ for all $x\in V$.
As a simple consequence, from basic theory on finite state space Markov chains we obtain that
\begin{equation}
\begin{aligned}\label{eq:single-step}
\mathbb{P}(X_n=x)&=\sum_{\rho\in \mathsf{TREE}_x(G),y\in N_x}\mathbb{P}((X_n,\rho_n)=(x,\rho\cup(x,y)))\\
&\xrightarrow[]{n\rightarrow\infty}\sum_{\rho\in \mathsf{TREE}_x(G),y\in N_x}\mu(x,\rho\cup(x,y))=\pi(x).
\end{aligned}
\end{equation}
\textcolor{black}{The following proposition proves part (2) of Theorem \ref{thm:erg}.}
\begin{prop}\label{prop:single-step-conv}
Consider a locally Markov walk $(X_n)_{n\in\N}$ on $G$ with local chains $(L^x(k))_{k\in\N}$ that have strictly positive transition probabilities, for all $x\in V$.  Then for all $x,y\in V$ it holds
$$\lim_{n\to\infty}\mathbb{P}(X_{n+1}=x|X_n=y)=q_y(x).$$
\end{prop}
\begin{proof}
We have
\begin{align*}
\mathbb{P}(X_{n+1}=x|X_n=y)&=\frac{\mathbb{P}(X_{n+1}=x,X_n=y)}{\mathbb{P}(X_n=y)}\\
&=\frac{1}{\mathbb{P}(X_n=y)}\sum_{\rho\in \mathsf{TREE}_y(G),z\in N_y}\mathbb{P}(X_{n+1}=x,X_n=y,\rho_n=\rho\cup(y,z))\\
&=\frac{1}{\mathbb{P}(X_n=y)}\sum_{\rho\in \mathsf{TREE}_y(G),z\in N_y}\mathsf{m}_y(z,x)\mathbb{P}((X_n,\rho_n)=(y,\rho\cup(y,z))).
\end{align*}
From the ergodicity of the total chain we have
$$\lim_{n\rightarrow\infty}\mathbb{P}((X_n,\rho_n)=(y,\rho\cup(y,z)))=\mu(y,\rho\cup(y,z)),$$
which combined with Equation (\ref{eq:single-step}) implies
\begin{align*}
\lim_{n\rightarrow\infty}\mathbb{P}(X_{n+1}=x|X_n=y)&=\frac{1}{\pi(y)}\sum_{\rho\in \mathsf{TREE}_y(G),z\in N_y}\mathsf{m}_y(z,x)\mu(y,\rho\cup(y,z))\\
&=\frac{1}{\pi(y)}\sum_{\rho\in \mathsf{TREE}_y(G)}\mu(y,\rho\cup(y,x))=\frac{1}{\pi(y)}q_y(x)\pi(y)=q_y(x),
\end{align*}
where from the second to last line we have once again used the Markov chain tree theorem.
\end{proof}

\section{Uniform unicycle walk}\label{sec:unicycle-walk}

Athanasiadis \cite{atha} studied a random walk on trees, originally introduced in \cite{markov-tree} as a tool for proving the Markov chain tree theorem. In this model, the root of the tree performs a simple random walk and the entire tree evolves according to the root’s movement. He subsequently proved a conjecture of Propp by determining the spectrum of the Laplacian matrix of the tree walk on complete graphs. In this section, we extend his approach to the total chain of the simple random walk on complete graphs, computing the spectrum of its transition matrix. In addition, we determine the mixing time of the total chain on complete graphs and establish that it exhibits the cutoff phenomenon, meaning that mixing occurs rather abruptly.

\subsubsection*{The graph of unicycles}
Let $G=(V,E)$ be a finite, directed and strongly connected graph. For an unicycle configuration $\eta\in \mathsf{CYC}(G)$ over $G$, we denote the vertices on the unique cycle by $\text{cycle}(U)$ and we recall Definition \ref{def:unic} of the unicycles $\mathsf{U}_G$ of $G$.
Starting from $G$, we construct a new graph whose vertex set is $\mathsf{U}_G$. Two vertices $(x,\rho),(y,\eta)\in\mathsf{U}_G$ are joined by an edge $((x,\rho),(y,\eta))$ whenever the following conditions are satisfied:
\begin{enumerate}
\setlength\itemsep{0em}
\item there exists an edge from $x$ to $y$ in $G$, i.e., $(x,y)\in E$;
\item  the unicycle configurations $\rho$ and $\eta$ coincide everywhere except at $x$, i.e., $\rho\setminus x = \eta \setminus x$;
\item in $\eta$, the arrow at $x$ points to $y$, i.e., $\eta(x)=y$.
\end{enumerate}
We denote the set of all such edges by $E_{\text{cycle}}$.
\begin{defn}[Unicycle Graph]
We call the graph $\mathsf{UCYC}(G)=(\mathsf{U}_G,E_{\text{cycle}})$ the unicycle graph derived from $G$.
\end{defn}
Next, we examine the simple random walk on the unicycle graph  $\mathsf{UCYC}(G)$ of $G$, that we denote it by $(X_n,\rho_n)_{n\in\N}$. This is a special case of a locally Markov walk. 
The following  is straightforward.
\begin{lem}
Consider the simple random walk $(X_n,\rho_n)_{n\in\N}$ on $\mathsf{UCYC}(G)$. Furthermore, let $(Y_n)_{n\in\N}$ be the simple random walk on $G$ and denote its total chain by $(Y_n,\eta_n)_{n\in\N}$. Then $(X_n,\rho_n)_{n\in\N}$ and $(Y_n,\eta_n)_{n\in\N}$ have the same distribution.
\end{lem}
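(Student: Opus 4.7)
The plan is to verify that the two chains have identical state spaces and identical one-step transition probabilities; once this is checked, equality in distribution follows from the Markov property and the assumption that they begin from the same state.

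First I would identify the state spaces. The total chain of the simple random walk on $G$ has as its recurrent states precisely $\mathsf{U}_G$, by the recurrence characterization given in Section~\ref{sec:recurrent-states}. The simple random walk on $\mathsf{UCYC}(G)$ has vertex set $\mathsf{U}_G$ by construction, so both live on $\mathsf{U}_G$ (after restriction to recurrent states).

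Next I would compute the out-degree of an arbitrary vertex $(x,\rho) \in \mathsf{U}_G$ in $\mathsf{UCYC}(G)$. By the definition of $E_{\text{cycle}}$, the out-neighbors of $(x,\rho)$ are the pairs $(y,\eta)$ with $y \in N_x$, $\rho \setminus x = \eta \setminus x$, and $\eta(x) = y$. Given $y \in N_x$, the configuration $\eta$ is uniquely determined as the modification of $\rho$ at $x$ to point to $y$; by Lemma~\ref{lem:rec_involution} applied to the one-step transition, the resulting $(y,\eta)$ automatically lies in $\mathsf{U}_G$. Thus $(x,\rho)$ has exactly $|N_x|$ out-neighbors in $\mathsf{UCYC}(G)$, and the simple random walk on $\mathsf{UCYC}(G)$ transitions from $(x,\rho)$ to each such $(y,\eta)$ with probability $1/|N_x|$.

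Then I would compare this with the transition probabilities of the total chain of $(Y_n)$. For the simple random walk on $G$, the local chain $L^x$ at $x$ is the simple random walk on $N_x$, whose transition matrix satisfies $\mathsf{m}_x(a,b) = 1/|N_x|$ for all $a,b \in N_x$, independent of $a$. Applying the definition of the total chain from Section~\ref{sec:local-total-walks}, the one-step probability from $(x,\rho)$ to $(y,\eta)$ equals $\mathsf{m}_x(\rho(x),y) = 1/|N_x|$ precisely when $\rho \triangle \eta = \{x\}$ and $\eta(x) = y$, and is zero otherwise. These are exactly the same non-zero transitions, with exactly the same probabilities, as the simple random walk on $\mathsf{UCYC}(G)$.

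Having matched transition kernels on a common state space, equality in distribution of $(X_n,\rho_n)_{n\in\N}$ and $(Y_n,\eta_n)_{n\in\N}$ follows, provided the initial states agree (which is implicit in the statement). There is really no hard step here; the only subtlety is the observation that the simple random walk's local chains are \emph{memoryless} at each vertex (constant rows), which is exactly what allows the total chain to collapse to a genuine Markov chain on $\mathsf{U}_G$ that depends only on the current pair $(x,\rho)$ through the number $|N_x|$ of out-neighbors.
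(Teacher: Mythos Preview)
Your proof is correct and follows the same approach as the paper's: both verify that the transition kernels of the two chains coincide on $\mathsf{U}_G$. The paper's argument is just a two-line sketch of what you have written out in detail, noting that by construction of $\mathsf{UCYC}(G)$ a single step of the walk on the unicycle graph corresponds exactly to a step of the total chain of the simple random walk on $G$.
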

\begin{proof}
By the construction of $\mathsf{UCYC}(G)$,  $(X_n)_{n\in\N}$ is a simple random walk on $G$ and a single step of $(X_n,\rho_n)_{n\in\N}$ corresponds to a step of the total chain of $(X_n)_{n\in\N}$. Thus the claim follows.
\end{proof}
\begin{defn}[Uniform Unicycle Walk]
The simple random walk $(X_n,\rho_n)_{n\in\N}$ on $\mathsf{UCYC}(G)$ is called uniform unicycle walk on $G$.
\end{defn}
\begin{figure}
    \centering
    \begin{tikzpicture}[scale=1.5]
    \begin{scope}[xshift=0cm]
    \filldraw[red] (0,0) circle (1pt);
    \filldraw[black] (1,0) circle (1pt);
    \filldraw[black] (0.5,0.866) circle (1pt);
    \draw[->, >=latex] (0,0) -- (1,0);
    \draw[->, >=latex] (1,0) -- (0,0);
    \draw[->, >=latex] (0.5,0.866) -- (1,0);
    \end{scope}
    \draw[->] (1.5,0.5) -- (2.5,0.5) node[midway, above]{Turning the arrow};
    \begin{scope}[xshift=3cm]
    \filldraw[red] (0,0) circle (1pt);
    \filldraw[black] (1,0) circle (1pt);
    \filldraw[black] (0.5,0.866) circle (1pt);
    \draw[->, >=latex] (0,0) -- (0.5,0.866);
    \draw[->, >=latex] (1,0) -- (0,0);
    \draw[->, >=latex] (0.5,0.866) -- (1,0);
    \end{scope}
    \draw[->] (4.5,0.5) -- (5.5,0.5) node[midway, above]{Moving the particle};
    \begin{scope}[xshift=6cm]
    \filldraw[black] (0,0) circle (1pt);
    \filldraw[black] (1,0) circle (1pt);
    \filldraw[red] (0.5,0.866) circle (1pt);
    \draw[->, >=latex] (0,0) -- (0.5,0.866);
    \draw[->, >=latex] (1,0) -- (0,0);
    \draw[->, >=latex] (0.5,0.866) -- (1,0);
    \end{scope}
\end{tikzpicture}
    \caption{Illustration of a step of the uniform unicycle walk on the complete graph with $3$ vertices.}
    \label{fig:one-step-unicycle-walk}
\end{figure}
\begin{figure}[t]
    \centering
    \begin{tikzpicture}[scale=0.85]
    \begin{scope}[rotate=90]
    \begin{scope}[xshift=0cm]
    \filldraw[red] (0,0) circle (1pt);
    \filldraw[black] (1,0) circle (1pt);
    \filldraw[black] (0.5,0.866) circle (1pt);
    \draw[->, >=latex] (0,0) -- (1,0);
    \draw[->, >=latex] (1,0) -- (0.5,0.866);
    \draw[->, >=latex] (0.5,0.866) -- (0,0);
    \end{scope}
    
    \begin{scope}[xshift=3cm]
    \filldraw[black] (0,0) circle (1pt);
    \filldraw[red] (1,0) circle (1pt);
    \filldraw[black] (0.5,0.866) circle (1pt);
    \draw[->, >=latex] (0,0) -- (1,0);
    \draw[->, >=latex] (1,0) -- (0.5,0.866);
    \draw[->, >=latex] (0.5,0.866) -- (0,0);
    \end{scope}

    \begin{scope}[xshift=6cm]
    \filldraw[black] (0,0) circle (1pt);
    \filldraw[black] (1,0) circle (1pt);
    \filldraw[red] (0.5,0.866) circle (1pt);
    \draw[->, >=latex] (0,0) -- (1,0);
    \draw[->, >=latex] (1,0) -- (0.5,0.866);
    \draw[->, >=latex] (0.5,0.866) -- (0,0);
    \end{scope}

    \begin{scope}[xshift=0cm,yshift=-2cm]
    \filldraw[black] (0,0) circle (1pt);
    \filldraw[black] (1,0) circle (1pt);
    \filldraw[red] (0.5,0.866) circle (1pt);
    \draw[->, >=latex] (0,0) -- (0.5,0.866);
    \draw[->, >=latex] (1,0) -- (0.5,0.866);
    \draw[->, >=latex] (0.5,0.866) -- (0,0);
    \end{scope}
    
    \begin{scope}[xshift=3cm,yshift=-2cm]
    \filldraw[red] (0,0) circle (1pt);
    \filldraw[black] (1,0) circle (1pt);
    \filldraw[black] (0.5,0.866) circle (1pt);
    \draw[->, >=latex] (0,0) -- (1,0);
    \draw[<-, >=latex] (0,0) -- (0.5,0.866);
    \draw[->, >=latex] (1,0) -- (0,0);
    \end{scope}

    \begin{scope}[xshift=6cm,yshift=-2cm]
    \filldraw[black] (0,0) circle (1pt);
    \filldraw[red] (1,0) circle (1pt);
    \filldraw[black] (0.5,0.866) circle (1pt);
    \draw[->, >=latex] (0,0) -- (1,0);
    \draw[->, >=latex] (1,0) -- (0.5,0.866);
    \draw[->, >=latex] (0.5,0.866) -- (1,0);
    \end{scope}

    \begin{scope}[xshift=0cm,yshift=-4cm]
    \filldraw[red] (0,0) circle (1pt);
    \filldraw[black] (1,0) circle (1pt);
    \filldraw[black] (0.5,0.866) circle (1pt);
    \draw[->, >=latex] (0,0) -- (0.5,0.866);
    \draw[->, >=latex] (1,0) -- (0.5,0.866);
    \draw[->, >=latex] (0.5,0.866) -- (0,0);
    \end{scope}
    
    \begin{scope}[xshift=3cm,yshift=-4cm]
    \filldraw[black] (0,0) circle (1pt);
    \filldraw[red] (1,0) circle (1pt);
    \filldraw[black] (0.5,0.866) circle (1pt);
    \draw[->, >=latex] (0,0) -- (1,0);
    \draw[<-, >=latex] (0,0) -- (0.5,0.866);
    \draw[->, >=latex] (1,0) -- (0,0);
    \end{scope}

    \begin{scope}[xshift=6cm,yshift=-4cm]
    \filldraw[black] (0,0) circle (1pt);
    \filldraw[black] (1,0) circle (1pt);
    \filldraw[red] (0.5,0.866) circle (1pt);
    \draw[->, >=latex] (0,0) -- (1,0);
    \draw[->, >=latex] (1,0) -- (0.5,0.866);
    \draw[->, >=latex] (0.5,0.866) -- (1,0);
    \end{scope}

    \begin{scope}[xshift=0cm,yshift=-6cm]
    \filldraw[black] (0,0) circle (1pt);
    \filldraw[red] (1,0) circle (1pt);
    \filldraw[black] (0.5,0.866) circle (1pt);
    \draw[->, >=latex] (0,0) -- (0.5,0.866);
    \draw[->, >=latex] (1,0) -- (0.5,0.866);
    \draw[->, >=latex] (0.5,0.866) -- (1,0);
    \end{scope}
    
    \begin{scope}[xshift=3cm,yshift=-6cm]
    \filldraw[black] (0,0) circle (1pt);
    \filldraw[black] (1,0) circle (1pt);
    \filldraw[red] (0.5,0.866) circle (1pt);
    \draw[->, >=latex] (0,0) -- (0.5,0.866);
    \draw[<-, >=latex] (0,0) -- (0.5,0.866);
    \draw[->, >=latex] (1,0) -- (0,0);
    \end{scope}

    \begin{scope}[xshift=6cm,yshift=-6cm]
    \filldraw[red] (0,0) circle (1pt);
    \filldraw[black] (1,0) circle (1pt);
    \filldraw[black] (0.5,0.866) circle (1pt);
    \draw[->, >=latex] (0,0) -- (1,0);
    \draw[->, >=latex] (1,0) -- (0,0);
    \draw[->, >=latex] (0.5,0.866) -- (1,0);
    \end{scope}

    \begin{scope}[xshift=0cm,yshift=-8cm]
    \filldraw[red] (0,0) circle (1pt);
    \filldraw[black] (1,0) circle (1pt);
    \filldraw[black] (0.5,0.866) circle (1pt);
    \draw[<-, >=latex] (0,0) -- (1,0);
    \draw[<-, >=latex] (1,0) -- (0.5,0.866);
    \draw[<-, >=latex] (0.5,0.866) -- (0,0);
    \end{scope}
    
    \begin{scope}[xshift=3cm,yshift=-8cm]
    \filldraw[black] (0,0) circle (1pt);
    \filldraw[red] (1,0) circle (1pt);
    \filldraw[black] (0.5,0.866) circle (1pt);
    \draw[<-, >=latex] (0,0) -- (1,0);
    \draw[<-, >=latex] (1,0) -- (0.5,0.866);
    \draw[<-, >=latex] (0.5,0.866) -- (0,0);
    \end{scope}

    \begin{scope}[xshift=6cm,yshift=-8cm]
    \filldraw[black] (0,0) circle (1pt);
    \filldraw[black] (1,0) circle (1pt);
    \filldraw[red] (0.5,0.866) circle (1pt);
    \draw[<-, >=latex] (0,0) -- (1,0);
    \draw[<-, >=latex] (1,0) -- (0.5,0.866);
    \draw[<-, >=latex] (0.5,0.866) -- (0,0);
    \end{scope}

    \begin{scope}[xshift=0cm,yshift=-10cm]
    \filldraw[black] (0,0) circle (1pt);
    \filldraw[black] (1,0) circle (1pt);
    \filldraw[red] (0.5,0.866) circle (1pt);
    \draw[->, >=latex] (0,0) -- (0.5,0.866);
    \draw[->, >=latex] (1,0) -- (0.5,0.866);
    \draw[->, >=latex] (0.5,0.866) -- (1,0);
    \end{scope}
    
    \begin{scope}[xshift=3cm,yshift=-10cm]
    \filldraw[red] (0,0) circle (1pt);
    \filldraw[black] (1,0) circle (1pt);
    \filldraw[black] (0.5,0.866) circle (1pt);
    \draw[->, >=latex] (0,0) -- (0.5,0.866);
    \draw[<-, >=latex] (0,0) -- (0.5,0.866);
    \draw[->, >=latex] (1,0) -- (0,0);
    \end{scope}

    \begin{scope}[xshift=6cm,yshift=-10cm]
    \filldraw[black] (0,0) circle (1pt);
    \filldraw[red] (1,0) circle (1pt);
    \filldraw[black] (0.5,0.866) circle (1pt);
    \draw[->, >=latex] (0,0) -- (1,0);
    \draw[->, >=latex] (1,0) -- (0,0);
    \draw[->, >=latex] (0.5,0.866) -- (1,0);
    \end{scope}

    \draw[->, line width = 0.5mm] (1,0.43) -- (3,0.43);
    \draw[->, line width = 0.5mm] (4,0.43) -- (6,0.43);
    \draw [->, line width = 0.5mm] (6,0.52) to [out=150,in=30] (1,0.52);

    \draw[->, line width = 0.5mm] (0.5,-0.1) -- (0.5,-1);
    \draw[->, line width = 0.5mm] (3.5,-0.1) -- (3.5,-1);
    \draw[->, line width = 0.5mm] (6.5,-0.1) -- (6.5,-1);

    \draw[<->, line width = 0.5mm] (0.5,-2.1) -- (0.5,-3);
    \draw[<->, line width = 0.5mm] (3.5,-2.1) -- (3.5,-3);
    \draw[<->, line width = 0.5mm] (6.5,-2.1) -- (6.5,-3);

    \draw[->, line width = 0.5mm] (0.4,-2.1) to [out=180,in=180] (0.4,-5);
    \draw[->, line width = 0.5mm] (3.4,-2.1) to [out=180,in=180] (3.4,-5);
    \draw[->, line width = 0.5mm] (6.4,-2.1) to [out=180,in=180] (6.4,-5);

    \draw[<-, line width = 0.5mm] (2.7,-0.2) -- (0.6,-3);
    \draw[<-, line width = 0.5mm] (5.7,-0.2) -- (3.6,-3);
    \draw[-, line width = 0.5mm] (7.3,0.43) to [out=-90, in=0] (6.6,-3);
    \draw[->, line width = 0.5mm] (7.3,0.43) to [out=90, in=90] (0.5,1.1);

    \draw[<->, line width = 0.5mm] (0.4,-6.1) to [out=180,in=180] (0.4,-9);
    \draw[<->, line width = 0.5mm] (3.4,-6.1) to [out=180,in=180] (3.4,-9);
    \draw[<->, line width = 0.5mm] (6.4,-6.1) to [out=180,in=180] (6.4,-9);

    \draw[->, line width = 0.5mm] (0.5,-6.1) -- (0.5,-7);
    \draw[->, line width = 0.5mm] (3.5,-6.1) -- (3.5,-7);
    \draw[->, line width = 0.5mm] (6.5,-6.1) -- (6.5,-7);

    \draw[<-, line width = 0.5mm] (2.7,-4.2) -- (0.6,-7);
    \draw[<-, line width = 0.5mm] (5.7,-4.2) -- (3.6,-7);

    \draw[<-, line width = 0.5mm] (1,-7.43) -- (3,-7.43);
    \draw[<-, line width = 0.5mm] (4,-7.43) -- (6,-7.43);

    \draw[->, line width = 0.5mm] (0.6,-9) to [out=0,in=0] (0.6,-4.1);
    \draw[->, line width = 0.5mm] (3.6,-9) to [out=0,in=0] (3.6,-4.1);
    \draw[->, line width = 0.5mm] (6.6,-9) to [out=0,in=0] (6.6,-4.1);

    \draw [<-, line width = 0.5mm] (6,-7.52) to [out=-150,in=-30] (1,-7.52);

    \draw [-, line width = 0.5mm] (4.5,-5) to [out=-90,in=180] (6,-7.3);
    \draw [->, line width = 0.5mm] (4.5,-5) to [out=90,in=-90] (0.5,-4.1);
    \end{scope}
\end{tikzpicture}
    \caption{The unicycle graph $\mathsf{UCYC}(K_3)$ together with the uniform unicycle walk. The vertex colored in red indicates the particle location.}
    \label{fig:unicycle-graph-complete-graph}
\end{figure}

\subsubsection*{Uniform unicycle walk on complete graphs}

Here, we consider the uniform unicycle walk on the complete graph $K_m$ with $m$ vertices, where the vertex set is  $\{1,...,m\}$ and the edge set $E(K_m)$ is \textcolor{black}{$\{(i,j):i,j\in\{1,...,m\},i\neq j\}$}. The adjacency matrix $A_m\in\Z^{\mathsf{UCYC}(K_m)\times\mathsf{UCYC}(K_m)}$ of $\mathsf{UCYC}(K_m)$ is defined as
$$A_m((x,\rho),(y,\eta))=\begin{cases}
1,& \text{ if }(x,\rho)\rightarrow(y,\eta)\text{ and }(x,\rho)\neq (y,\eta)\\
0,&\text{ else }
\end{cases},$$
and the Laplacian matrix $L_m\in\Z^{\mathsf{UCYC}(K_m)\times\mathsf{UCYC}(K_m)}$ of $\mathsf{UCYC}(K_m)$ is defined as 
$$L_m((x,\rho),(y,\eta))=\begin{cases}
m-1,&\text{ if }(x,\rho)=(y,\eta)\\
-1,& \text{ if }(x,\rho)\rightarrow(y,\eta)\\
0,&\text{ else}
\end{cases}.$$

\subsubsection*{Eigenvalues of the unicycle Laplacian via path counting}

We compute here the eigenvalues of $L_m$ for $m\in\N$ using the method from \cite{atha}. Note that, for $k\in\N$ and $i,j\in\mathsf{U}_{K_m}$, the entry $A^k_m(i,j)$  counts the number of paths from configuration $i$ to configuration $j$. Consequently, the trace
$\text{tr}A^k_m=\sum_{i\in\mathsf{U}_{K_m}}A^k_m(i,i)$,
counts the number of closed paths in the unicycle graph $\mathsf{UCYC}(K_m)$ of $K_m$.  Alternatively, $A_m^k$ can also be expressed as the sum of the $k$-th powers of the eigenvalues of $A_m$, each weighted by its multiplicity. We focus now on  counting the number of closed walks of a certain length in the unicycle graph $\mathsf{UCYC}(K_m)$ of $K_m$.
Suppose we start the unicycle walk at some configuration $(x,\rho)\in\mathsf{U}_{K_m}$ and end it at the same configuration. Then the roots of the walk trace out a closed walk in $K_m$ starting and ending at $x$. This allows us to derive the number of closed walks in $\mathsf{UCYC}(K_m)$ from the number of closed walks in $K_m$.
For $G=(V,E)$ and $k\in \mathbb{N}$, we denote by $w(G,k)$ the number of closed walks of length $k$ in $G$, and for $S\subseteq V$, we write $G_S$ for the subgraph of $G$ induced by $S$, consisting of the vertices in $S$ and all edges of $G$ connecting pairs of vertices in $S$.

\begin{lem}\label{lem:number-closed-walks}
For any $k\in \N$ we have
\begin{align*}
    w(\textcolor{black}{\mathsf{UCYC}(G)},k)=\sum_{S:\ S\subseteq V}w(G_S,k)\det(\Delta_G\vert_{\textcolor{black}{V\backslash S}}-I).
\end{align*}
\end{lem}
\begin{proof}
Note that a closed walk in $\mathsf{UCYC}(G)$ is uniquely determined by choosing an initial root $x_0$ and a tree rooted at $x_0$ and denoted by $T_0$, as well as a closed walk in $G$ denoted by $x_0\rightarrow x_1 \rightarrow ...\rightarrow x_k$, with $x_k=x_0$. If we write $L:=\max\{1\leq i\leq k:\ x_i=x_0\}$, then the closed walk in $\mathsf{UCYC}(G)$ starts in $(x_0,T_0\cup\{(x_L,x_{L+1})\})$.
Hence, for a closed walk of length $k$ in $G$ given by $x_0\rightarrow x_1 \rightarrow ...\rightarrow x_k$ with $x_k=x_0$, we have to count the number of spanning trees rooted at $x_0$, in order to obtain a closed walk of length $k$ in $\mathsf{UCYC}(G)$. Note that if a vertex $v\in V$ appears in $\{x_0,...,x_k\}$ for the last time with index $L_v$, then in the tree $T_0$ the outgoing edge from $v$ points towards $x_{L_v+1}$. Thus the number of trees $T_0$, which yields a closed walk in $\mathsf{UCYC}(G)$ is given by the number of rooted spanning forests of $G$ with root set $\{x_0,...,x_k\}$. Denote the number of rooted spanning forests of $G$ with root set $S\subseteq V$ by $\tau(G,S)$. 
We have 
\begin{align*}
    w(\mathsf{UCYC}(G),k)=\sum_{S:\  S\subseteq V}\tau(G,S)g(G_S,k),
\end{align*}
where $g(G_S,k)$ is the number of closed walks of length $k$ in $G_S$ visiting all of its vertices. \textcolor{black}{The term $g(G_S,k)$ can be rewritten as}
\begin{align*}
    g(G_S,k)=\sum_{U: \ U\subseteq S}(-1)^{|S\setminus U|}w(G_U,k),
\end{align*}
\textcolor{black}{where we have used the inclusion-exclusion principle for the set of paths in $G_S$ of length $k$ that visit all the vertices in $G_S$.} Plugging this into the formula for $w(\mathsf{UCYC}(G),k)$ and rearranging the  summation order we obtain
\begin{align*}
    w(\mathsf{UCYC}(G),k)=\sum_{S:\ S\subseteq V}w(G_S,k)\sum_{U:\ S\subseteq U\subseteq V}(-1)^{|U\setminus S|}\tau(G,U).
\end{align*}
The matrix tree theorem for computing $\tau(G,U)$ in terms of the Laplacian of $G$ yields
$$\sum_{U:\ S\subseteq U\subseteq V}(-1)^{|U\setminus S|}\tau(G,U)=\sum_{U: \ S\subseteq U\subseteq V}(-1)^{|U\setminus S|}\text{det}(\Delta_G\vert_{V\backslash U}).$$
The claim follows since $\text{det}(\Delta_G\vert_{V\backslash S}-I)$ can be written using the matrix principal minors as
$$\text{det}(\Delta_G\vert_{V\backslash S}-I)=\sum_{U: \ S\subseteq U\subseteq V}(-1)^{|U\setminus S|}\text{det}(\Delta_G\vert_{V\backslash U}).$$
\end{proof}
Consequently, the spectrum of $A_m$ can be derived from the spectra of the induced subgraphs of $K_m$. We denote by
$(K_m)_S:=K_m^S$ the subgraph of $K_m$ induced by a subset $S$ of vertices.
{\color{black} For completeness, we also state below \cite[Lemma 2.1]{atha}, which we will use later on.
\begin{lem}\label{lem:sum_unique}
Assume that for some non-zero complex numbers $(a_i)_{1\leq i\leq r}$ and $(b_j)_{1\leq j \leq s}$ we have 
$\sum_{i=1}^{r}a_i^l=\sum_{j=1}^{s}b_j^l$, for all $l\in\N$.
Then $r=s$ and $(a_i)_{1\leq i\leq r}$ is a permutation of $(b_j)_{1\leq j \leq s}$.
\end{lem}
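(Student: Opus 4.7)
My plan is to reduce the statement to showing that a certain Vandermonde-type linear system has only the trivial solution. First, I would consolidate repeated values: let $v_1,\dots,v_n$ be the distinct complex numbers appearing in the combined multiset $\{a_1,\dots,a_r\}\cup\{b_1,\dots,b_s\}$, and let $m_k$ (respectively $n_k$) denote the multiplicity of $v_k$ among the $a_i$'s (respectively among the $b_j$'s). All $v_k$ are nonzero by hypothesis, and the identity $\sum_i a_i^l=\sum_j b_j^l$ for every $l\in\N$ rewrites as
\begin{align*}
\sum_{k=1}^n e_k v_k^l=0\qquad\text{for all }l\geq 1,
\end{align*}
where $e_k:=m_k-n_k\in\Z$. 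The goal is then to show that $e_k=0$ for every $k$, since this immediately gives $r=\sum_k m_k=\sum_k n_k=s$ and identifies the two multisets.

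The core step is to restrict attention to $l=1,2,\dots,n$ and view the resulting $n$ equations as a linear system $M\mathbf{e}=0$ where $M\in\C^{n\times n}$ has entries $M_{lk}=v_k^l$. Factoring $v_k$ out of the $k$-th column yields $M=V\,\mathrm{diag}(v_1,\dots,v_n)$, where $V$ is the Vandermonde matrix with $V_{lk}=v_k^{l-1}$. Since the $v_k$ are pairwise distinct, $V$ is invertible, and since each $v_k$ is nonzero, the diagonal factor is invertible as well. Hence $M$ is invertible, which forces $\mathbf{e}=0$ and completes the argument.

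The only thing that requires a tiny bit of care is the initial consolidation step, since one must treat the multiplicities $e_k$ as real (in fact integer) unknowns and be sure to include every distinct value from both sides. Once the setup is in place, the Vandermonde argument is essentially immediate, so I do not expect any genuine obstacle; the entire lemma is really a statement about the linear independence of the power sequences $(v_k^l)_{l\geq 1}$ for distinct nonzero $v_k$, packaged in a convenient form for the upcoming trace computation on $A_m^k$.
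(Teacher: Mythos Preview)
Your argument is correct: consolidating into distinct nonzero values $v_1,\dots,v_n$ with integer net multiplicities $e_k=m_k-n_k$, the equations $\sum_k e_k v_k^l=0$ for $l=1,\dots,n$ form a linear system whose coefficient matrix factors as a Vandermonde matrix times $\mathrm{diag}(v_1,\dots,v_n)$, both invertible under the hypotheses, forcing $e_k=0$ for all $k$ and hence $r=s$ and equality of multisets.

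Note, however, that the paper does not supply its own proof of this lemma: it is quoted verbatim from \cite[Lemma~2.1]{atha} and stated ``for completeness'' without argument. So there is no proof in the paper to compare against. Your Vandermonde approach is the standard one (and is essentially what Athanasiadis does); an equivalent route is via Newton's identities, observing that the power sums $p_l=\sum_i a_i^l$ determine the elementary symmetric functions and hence the polynomial $\prod_i(z-a_i)$, so that the multiset of $a_i$'s is determined by the sequence $(p_l)_{l\geq 1}$. Either way the lemma is elementary and your write-up is complete as it stands.
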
}

\begin{cor}\label{cor:ev-mult}
All nonzero eigenvalues of $A_m$ are given by the nonzero eigenvalues of the induced subgraphs of $K_m$. Furthermore, for $\lambda\neq 0$, the multiplicity $\mathbf{m}_{\mathsf{UCYC}(K_m)}(\lambda)$ of $\lambda$ in $A_m$ is given by
\begin{align*}
    \mathbf{m}_{\mathsf{UCYC}(K_m)}(\lambda)=\sum_{S: \ S\subseteq V}\mathbf{m}_{K_m^S}(\lambda)\det(\Delta_{K_m}|_{\textcolor{black}{V\backslash S}}-I),
\end{align*}
where $\mathbf{m}_{K_m^S}(\lambda)$ denotes the multiplicity of $\lambda$ in the adjacency matrix of  $K_m^S$.
\end{cor}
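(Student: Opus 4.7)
The plan is to extract the corollary from Lemma \ref{lem:number-closed-walks} by comparing power sums of eigenvalues on both sides of the walk-count identity. First, I would observe that for any $k \geq 1$,
\begin{align*}
w(\mathsf{UCYC}(K_m),k) = \tr(A_m^k) = \sum_{\lambda \ne 0} \mathbf{m}_{\mathsf{UCYC}(K_m)}(\lambda)\,\lambda^k,
\end{align*}
since the diagonal entries of $A_m^k$ count closed walks of length $k$, and zero eigenvalues contribute nothing for $k \geq 1$. Similarly, $w(K_m^S,k) = \sum_{\lambda \ne 0} \mathbf{m}_{K_m^S}(\lambda)\,\lambda^k$ for each subset $S \subseteq V$.

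Plugging these expansions into the identity of Lemma \ref{lem:number-closed-walks} and swapping the order of summation gives, for every $k \geq 1$,
\begin{align*}
\sum_{\lambda \ne 0} \mathbf{m}_{\mathsf{UCYC}(K_m)}(\lambda)\,\lambda^k \;=\; \sum_{\lambda \ne 0} \left[\, \sum_{S \subseteq V} \mathbf{m}_{K_m^S}(\lambda)\,\det(\Delta_{K_m}|_S - I) \,\right] \lambda^k.
\end{align*}
Call the bracketed integer $c(\lambda)$. Both sides are power-sum expressions in the nonzero complex numbers $\{\lambda\}$. To put the identity in the form demanded by Lemma \ref{lem:sum_unique}, I would move all terms where $c(\lambda) < 0$ to the left-hand side (and, symmetrically, any eigenvalue $\lambda$ of $A_m$ with $\mathbf{m}_{\mathsf{UCYC}(K_m)}(\lambda) > 0$ but $c(\lambda) \leq 0$ is handled by the same regrouping). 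The result is an equality of two formal sums with nonnegative integer multiplicities on each side, holding for all $k \in \mathbb{N}$.

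Applying Lemma \ref{lem:sum_unique} then forces the two multisets to coincide, which, after undoing the regrouping, is the same as the coefficient-wise identity $\mathbf{m}_{\mathsf{UCYC}(K_m)}(\lambda) = c(\lambda)$ for every nonzero $\lambda \in \mathbb{C}$. In particular, $\mathbf{m}_{\mathsf{UCYC}(K_m)}(\lambda) > 0$ implies $c(\lambda) > 0$, which forces $\mathbf{m}_{K_m^S}(\lambda) > 0$ for at least one $S$, proving that every nonzero eigenvalue of $A_m$ arises as a nonzero eigenvalue of some induced subgraph $K_m^S$. The multiplicity formula is exactly the equality $\mathbf{m}_{\mathsf{UCYC}(K_m)}(\lambda) = c(\lambda)$.

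The only subtlety — and the step I would be most careful about — is the sign bookkeeping: Lemma \ref{lem:sum_unique} is stated for multisets with positive multiplicities, whereas $\det(\Delta_{K_m}|_S - I)$ can be negative, so $c(\lambda)$ is a priori just an integer. The regrouping trick sidesteps this, but an equivalent and perhaps cleaner route is to invoke the linear independence of the sequences $(k \mapsto \lambda^k)_{\lambda \ne 0}$ over $\mathbb{C}$ (which is a standard consequence of Vandermonde), so that matching the values of both sides of the identity for infinitely many $k$ forces coefficient-wise equality directly, without worrying about signs.
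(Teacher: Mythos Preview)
Your argument is correct and is precisely the approach the paper intends: the paper sets up the trace/closed-walk identity and cites Lemma~\ref{lem:sum_unique} just before stating the corollary without proof, so what you have written is exactly the omitted derivation. Your observation about the sign of $\det(\Delta_{K_m}|_S - I)$ is a genuine subtlety the paper does not address; either of your two fixes (regrouping, or linear independence of the sequences $(\lambda^k)_{k\ge 1}$ via Vandermonde) closes the gap cleanly.
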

As a consequence, we determine all the eigenvalues of $L_m$ \textcolor{black}{as follows}.  Let $\mathbf{m}_{L_m}(\cdot)$ denote the multiplicity of an eigenvalue of $L_m$.

\begin{prop}\label{prop:propp}
The Laplacian matrix $L_m$ of the uniform unicycle walk on $K_m$ has eigenvalues $\{-1,0,1,...,m-1\}$ and the corresponding multiplicities are given by 
\begin{align*}
    \mathbf{m}_{L_m}(m-1-i)=\begin{cases}
    i\binom{m}{i+1}(m-1)^{m-i-2}&,1\leq i\leq m-1\\
    m^{m-1}-(m-1)^{m-1}&,i=-1\\
    m^{m-1}(m-2)&,i=0
    \end{cases}.
\end{align*}
\end{prop}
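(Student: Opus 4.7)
The plan is to apply Corollary \ref{cor:ev-mult} directly: every nonzero eigenvalue of the adjacency matrix $A_m$ of $\mathsf{UCYC}(K_m)$ arises as a nonzero adjacency eigenvalue of some induced complete subgraph $K_m^S = K_{|S|}$, with multiplicities that combine additively, weighted by $\det(\Delta_{K_m}|_S - I)$. Since $L_m = (m-1)I - A_m$, the spectrum of $L_m$ is obtained from that of $A_m$ via the substitution $\mu = m - 1 - \lambda$, with matching multiplicities, so it suffices to compute the multiplicities of the eigenvalues of $A_m$ and reindex.

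The two ingredients of Corollary \ref{cor:ev-mult} are easy to make explicit in the complete-graph case. The adjacency matrix of $K_m^S = K_s$ with $s = |S|$ is $J_s - I_s$, whose spectrum is the simple eigenvalue $s - 1$ together with the eigenvalue $-1$ of multiplicity $s - 1$. For the weight, $\Delta_{K_m} = m I_m - J_m$, so every principal block of $\Delta_{K_m}|_S - I$ is of the form (scalar matrix) minus (rank-one), and a direct eigenvalue computation gives $\det(\Delta_{K_m}|_S - I) = (s-1)(m-1)^{m-s-1}$.

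Assembling the multiplicities, for $\lambda = j$ with $j \in \{1, \dots, m-1\}$ only subsets of size $s = j+1$ can contribute (via the simple eigenvalue $s - 1 = j$), producing
\[
\mathbf{m}_{A_m}(j) = \binom{m}{j+1}\, j\, (m-1)^{m-j-2},
\]
which matches the $i = j$ line of the proposition. For $\lambda = -1$ every $S$ of size $s \geq 2$ contributes with multiplicity $s - 1$, and the resulting sum
\[
\mathbf{m}_{A_m}(-1) = \sum_{s=2}^{m} \binom{m}{s}(s-1)^2 (m-1)^{m-s-1}
\]
has to be collapsed into closed form. The multiplicity of $0$ in $A_m$ is then pinned down by the total count $|\mathsf{U}_{K_m}| = m^m$, a Cayley-type enumeration of rooted spanning unicycles of $K_m$ (each such unicycle with a distinguished cycle vertex arises from a rooted spanning tree by adjoining one outgoing edge at the root).

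The main obstacle is the single binomial identity for $\mathbf{m}_{A_m}(-1)$, which I would attack by differentiating $(x + m - 1)^m = \sum_s \binom{m}{s}(m-1)^{m-s} x^s$ once and twice and specialising at $x = 1$ to read off $\sum s \binom{m}{s}(m-1)^{m-s} = m^m$ and $\sum s^2 \binom{m}{s}(m-1)^{m-s} = m^{m-1}(2m - 1)$; combining these collapses the sum to the closed form $m^{m-1} - (m-1)^{m-1}$ and yields the $i = -1$ line. The same generating-function identity also evaluates the complementary sum $\sum_{j=1}^{m-1} \mathbf{m}_{A_m}(j) = (m-1)^{m-1}$, so that all multiplicities add up to $m^m$ as they must. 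Beyond this one algebraic step, the proof is pure bookkeeping; the only delicate point is the boundary $s = m$, where the exponent $(m-1)^{-1}$ cancels against the factor $s-1 = m-1$ and is consistent with the convention that the empty-matrix determinant equals $1$.
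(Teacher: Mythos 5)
Your overall route is exactly the paper's: use Corollary \ref{cor:ev-mult} with the fact that induced subgraphs of $K_m$ are complete, pass to $L_m=(m-1)I-A_m$, and the paper leaves the bookkeeping implicit where you spell it out. Your reading of $\det(\Delta_{K_m}|_S-I)$ as the minor on the complement of $S$ (giving $(s-1)(m-1)^{m-s-1}$, with the empty-minor convention at $s=m$) is the correct interpretation of the corollary, your $j\geq 1$ multiplicities match the statement, and your generating-function evaluation of $\sum_{s\ge 2}\binom{m}{s}(s-1)^2(m-1)^{m-s-1}=m^{m-1}-(m-1)^{m-1}$ is correct.

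The gap is in the $i=0$ case, and it comes from the count $|\mathsf{U}_{K_m}|=m^m$. Since Corollary \ref{cor:ev-mult} only controls nonzero eigenvalues of $A_m$, the multiplicity of $0$ must indeed come from the total dimension, but for the unicycle graph whose spectrum is being computed here that dimension is $(m-1)m^{m-1}$, not $m^m$: a recurrent state is a spanning tree of $K_m$ rooted at $x$ ($m\cdot m^{m-2}$ choices) together with one of the $m-1$ outgoing non-loop edges at $x$ (compare the $18=2\cdot 3^2$ states of $\mathsf{UCYC}(K_3)$ in Figure \ref{fig:unicycle-graph-complete-graph}). Your figure $m^m$ counts the self-loop as an admissible outgoing edge, but every other ingredient you use — the spectrum $\{|S|-1,\,-1^{(|S|-1)}\}$ of the induced subgraphs, $\Delta_{K_m}=mI-J$, and the $(m-1)$-regularity implicit in $L_m=(m-1)I-A_m$ — is that of the loopless $K_m$, so the two conventions clash. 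Concretely, with your count the subtraction gives $\mathbf{m}_{A_m}(0)=m^m-m^{m-1}=m^{m-1}(m-1)$, which contradicts the claimed $m^{m-1}(m-2)$, and your closing consistency check fails: the multiplicities in the proposition sum to $(m-1)m^{m-1}$, not $m^m$. Replacing $m^m$ by $(m-1)m^{m-1}$ repairs the argument, since $(m-1)m^{m-1}-(m-1)^{m-1}-\bigl(m^{m-1}-(m-1)^{m-1}\bigr)=m^{m-1}(m-2)$, which is exactly the $i=0$ line.
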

\begin{proof}
Since $L_m=(m-1)I_{|\mathsf{U}_{K_m}|}-A_m$, we can derive the eigenvalues of $L_m$ from those of $A_m$. In order to do so, we use Corollary \ref{cor:ev-mult} together with the fact that the induced subgraphs of $K_m$ are also complete graphs.
Thus for $S\subseteq V$, the eigenvalues of $K_m|_S$ are $|S|-1$ and $-1$ with multiplicities $1$ and  $|S|-1$, respectively. Plugging this into Corollary \ref{cor:ev-mult} completes the proof. To see for example the first case, we remark that for all $\vert S\vert \geq 2$ it holds that
$$\det(\Delta_{K_m}|_{V\backslash S}-I)=(\vert S\vert-1)(m-1)^{m-1-\vert S\vert},$$
thus the value $i$ for $1\leq i \leq m-1$ has multiplicity
$i\binom{m}{i+1}(m-1)^{m-i-2}$ as an eigenvalue of $A_m$.
\end{proof}
Although we are able to determine the spectrum of $L_m$, the eigenvectors and generalized eigenvectors remain out of reach. Concerning the structure of generalized eigenvectors for the uniform unicycle walk on arbitrary finite, strongly connected graphs $G$, we can prove the following result.

\begin{lem}\label{lem:ev-constant}
Let $P$ be the transition matrix of the uniform unicycle walk on a finite, strongly connected graph $G$, and let $f$ be a generalized eigenvector of $P$ associated with an eigenvalue $\lambda \neq 0$. Then, for every vertex $x \in V$ and every oriented tree $T$ rooted at $x$, we have
\begin{align*}
    f(x,T\cup\{(x,y)\})=f(x,T\cup\{(x,z)\}),\quad \text{for } y,z\in N_x.
\end{align*}
\end{lem}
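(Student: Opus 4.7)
The main observation I would exploit is that, from any state of the form $(x, T \cup \{(x,y)\}) \in \mathsf{U}_G$ with $T \in \mathsf{TREE}_x(G)$, the one-step distribution of the uniform unicycle walk does not depend on the previously-chosen neighbor $y \in N_x$. Indeed, a single step first replaces the arrow at $x$ by a uniformly random $w \in N_x$ and then moves the particle to $w$, so the target is $(w, T \cup \{(x,w)\})$, which depends only on $T$, $x$ and $w$. Moreover $T \cup \{(x,w)\}$ is automatically a spanning unicycle with $w$ on its unique cycle, so each such target lies in $\mathsf{U}_G$ and the row of $P$ indexed by $(x, T \cup \{(x,y)\})$ is the same for every $y \in N_x$.

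As an immediate consequence, for any function $f \colon \mathsf{U}_G \to \R$ and any $y,z \in N_x$,
\begin{equation*}
(Pf)\bigl(x, T \cup \{(x,y)\}\bigr) = (Pf)\bigl(x, T \cup \{(x,z)\}\bigr).
\end{equation*}

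I would then prove the lemma by induction on the rank $k \ge 1$ for which $(P - \lambda I)^k f = 0$. For $k=1$ the relation $Pf = \lambda f$ combined with the identity above gives
$\lambda f(x, T \cup \{(x,y)\}) = \lambda f(x, T \cup \{(x,z)\})$, and dividing by $\lambda \neq 0$ yields the claim. For the inductive step, set $h := (P - \lambda I) f$, which is a generalized eigenvector of rank $k-1$, so by the inductive hypothesis $h$ is constant on each fiber $\{(x, T \cup \{(x,\cdot)\})\}$. Writing $Pf = \lambda f + h$ and applying the fiber-constancy of $Pf$ gives
\begin{equation*}
\lambda\bigl[f(x, T \cup \{(x,y)\}) - f(x, T \cup \{(x,z)\})\bigr] = h(x, T \cup \{(x,z)\}) - h(x, T \cup \{(x,y)\}) = 0,
\end{equation*}
and dividing by $\lambda \ne 0$ completes the induction.

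There is no serious obstacle here: the heart of the proof is the simple structural observation about the transition matrix $P$ (the fact that the outgoing row at a particle-on-cycle state only depends on the tree part $T$, not on the specific cycle edge). The hypothesis $\lambda \neq 0$ is used to divide out $\lambda$ at each step of the induction, which is precisely where the argument would fail for the eigenvalue $0$.
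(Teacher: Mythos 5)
Your proposal is correct, and it rests on exactly the same structural fact as the paper: for the uniform unicycle walk the outgoing transition probabilities from a state $(x,T\cup\{(x,y)\})$ are $1/|N_x|$ to each $(w,T\cup\{(x,w)\})$, independently of $y$ — this is precisely the content of the paper's single-step decomposition $P=B_{\text{loc}}A_{\mathsf{CYC}}$, where for the uniform walk the block $B_{\text{loc}}$ has identical rows within each fiber. Where you diverge is in how this observation is converted into a statement about \emph{generalized} eigenvectors with $\lambda\neq 0$. The paper argues via ranges: a generalized eigenvector to a nonzero eigenvalue lies in $\operatorname{ran}(P)$, and since $A_{\mathsf{CYC}}$ is a permutation matrix, $\operatorname{ran}(P)=\operatorname{ran}(PA_{\mathsf{CYC}}^T)$, every element of which is constant on fibers because the blocks of $PA_{\mathsf{CYC}}^T$ have all entries equal. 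You instead run an induction on the rank $k$ of the generalized eigenvector, writing $Pf=\lambda f+h$ with $h=(P-\lambda I)f$ of lower rank, using that $(Pf)$ is fiber-constant for \emph{any} $f$, and dividing by $\lambda\neq 0$ at each step. Both uses of $\lambda\neq 0$ are legitimate (the paper's is hidden in the fact that $P$ acts invertibly on the generalized eigenspace of a nonzero eigenvalue, so $f\in\operatorname{ran}(P)$); your induction is more elementary and self-contained, while the paper's range identity is shorter and dispenses with the case analysis on the rank. Your argument also silently works with right (generalized) eigenvectors, which matches the paper's reading; note that for this walk the fiber blocks are constant matrices, so the identical statement and proof go through for left eigenvectors by using columns instead of rows.
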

\begin{proof}
Let $\lambda\neq 0$ and let $f$ be a generalized eigenfunction of $P$ with eigenvalue $\lambda$. Then $f\in\text{ran}(P)$.  If $\zeta$ is the unicycle permutation (see Definition \ref{def:perm_matrix}) on $G$, then $PA_{\mathsf{CYC}} ^T$ is a block diagonal matrix, and the blocks are the transition matrices of the local chains. For the uniform unicycle walk, the transition probabilities of the local chains are uniform. Thus for $g\in\text{ran}(PA_{\mathsf{CYC}} ^T)$,  $x\in V$ and all oriented trees $T$ rooted at $x$ we have 
\begin{align*}
    g(x,T\cup\{(x,y)\})=(g,T\cup\{(x,z)\}),
\end{align*}
where $y,z\in N_x$. The claim follows immediately from
$\text{ran}(P)=\text{ran}((P A_{\mathsf{CYC}}^T)A_{\mathsf{CYC}})=\text{ran}(P A_{\mathsf{CYC}}^T)$.
\end{proof}

\subsection{Mixing and cutoff for the uniform unicycle walk on complete graphs}

For two measure $\iota,\zeta$ defined on the same finite set $S$, their total variation distance is 
\begin{align}\label{eq:totvar}
||\iota-\zeta||_{\text{TV}}=\frac{1}{2}\sum_{s\in S}|\iota(s)-\zeta(s)|.
\end{align}
The aim of this part is to prove that the uniform unicycle walk on $K_m$ exhibits cutoff, i.e., for every $\varepsilon \in (0,1)$ we have
$$\lim_{m\rightarrow \infty}\frac{t_{\text{mix}}^{(m)}(\varepsilon)}{t_{\text{mix}}^{(m)}(1-\varepsilon)}=1,$$
where $t_{\text{mix}}^{(m)}(\varepsilon)$ is the mixing time of the uniform unicycle walk $(X^{(m)}_n,\rho^{(m)}_n)_{n\in\N}$ on the complete graph $K_m$ on $m$ vertices,  which is defined as
$$t_{\text{mix}}^{(m)}(\varepsilon):=\min\big\{t\in\N:\max_{(y,\eta)\in\mathsf{U}_G}||{p}^{(t)}((y,\eta),\cdot)-\mu(\cdot)||_{\text{TV}}<\varepsilon\big\}.$$
For the rest of the paper, we allow self-loops in complete graphs, so that the edge set of $K_m$ becomes $\{(i,j): i,j \in \{1,\dots,m\}\}$. This modification simplifies certain calculations, since it enables to describe the cover time of the simple random walk on complete graphs via the classical coupon collector’s problem. We establish this result using the Aldous–Broder backwards algorithm, which generates uniform spanning trees of a graph through a simple random walk (see \cite{aldous-broder}). Recall that the cover time of a random walk is the first time when all states have been visited.

\begin{lem}
Let $G$ be a finite, strongly connected graph, and $(X_n)_{n \in \mathbb{N}}$ be a simple random walk on $G$ with cover time $C$. For $v \in G$, let $l_v(k)$ be the last visit to $v$ before time $k\in \mathbb{N}$. Then the set
$B_{C+1}=\{(X_{l_v(C+1)},X_{l_v(C+1)+1}):v\in G\setminus\{X_{C+1}\}\}$
is distributed as an uniform spanning tree on $G$.
\end{lem}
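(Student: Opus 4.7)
My plan is to deduce this backwards Aldous--Broder statement from the classical (forward) Aldous--Broder theorem by time reversal. Define the reversed trajectory $\widetilde{X}_j := X_{C+1-j}$ for $j=0,1,\ldots,C+1$. Simple random walk on an undirected graph is reversible with respect to the degree measure, so detailed balance yields the path identity
\begin{align*}
d(x_0)\,\mathbb{P}_{x_0}(X_1=x_1,\ldots,X_N=x_N)=d(x_N)\,\mathbb{P}_{x_N}(X_1=x_{N-1},\ldots,X_N=x_0)
\end{align*}
for any deterministic path $x_0,\ldots,x_N$. Consequently, jointly with its endpoints, the reversed trajectory $\widetilde{X}$ has the same conditional distribution as a simple random walk run from $X_{C+1}$.

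The forward and reverse bookkeeping then match bijectively: for each vertex $v\neq X_{C+1}$, the last visit of $X$ to $v$ at time $l_v(C+1)$ becomes the first visit of $\widetilde{X}$ to $v$ at time $\widetilde{\tau}_v:=C+1-l_v(C+1)$, and the forward edge $(X_{l_v(C+1)},X_{l_v(C+1)+1})=(v,X_{l_v(C+1)+1})$ is precisely the reverse of the first-entrance edge $(\widetilde{X}_{\widetilde{\tau}_v-1},\widetilde{X}_{\widetilde{\tau}_v})$ of $\widetilde{X}$ into $v$. Because the forward walk covers $G$ by time $C$, the reversed walk covers $G$ within its first $C+1$ steps and is therefore past its own cover time. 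The classical Aldous--Broder theorem applied to $\widetilde{X}$, started at $X_{C+1}$, then guarantees that the collection of first-entrance edges (one per non-root vertex) forms a uniform spanning tree of $G$; viewed as an unoriented subgraph this coincides with $B_{C+1}$, and uniformity is inherited.

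The main obstacle is making the time-reversal rigorous when $C+1$ is a random length rather than a deterministic one. A clean way around this is to first fix a large deterministic horizon $N$, establish the reversibility identity on paths of that fixed length, and then exploit two monotonicity facts: once the forward walk has covered $G$ by some $C<N$, the set $B_N$ depends only on the trajectory up through time $C+1$; and symmetrically, the first-entrance tree of $\widetilde{X}$ stabilises once $\widetilde{X}$ has covered $G$. This lets one pass from deterministic $N$ to the random stopping time $C+1$ without incurring any correction, so that after the bookkeeping identification above the statement follows directly from the classical Aldous--Broder theorem.
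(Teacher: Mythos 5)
Your reduction to the forward Aldous--Broder theorem by time reversal has a genuine gap, and it sits exactly where you locate the ``main obstacle''. The reversal identity you quote is valid for a \emph{deterministic} horizon $N$, but the tree in the statement is read off at the random time $C+1$. That time is a stopping time for the forward walk only; seen from the reversed walk it is determined by the walk's own future. Concretely, the event $\{C=c\}$ says that $X_c$ is visited for the first time at time $c$, which after reversal becomes the condition that the reversed walk never returns to its time-one position during its remaining lifetime; so $(\widetilde{X}_j)_{j\le C+1}$ is \emph{not} a simple random walk run beyond its cover time, but a walk conditioned on a nontrivial trajectory event, and the classical Aldous--Broder theorem cannot be applied to it. Your proposed repair through a fixed horizon $N$ rests on the claim that, once the walk has covered $G$ at time $C<N$, the set $B_N$ ``depends only on the trajectory up through time $C+1$''. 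This is false: last-exit edges keep updating after the cover time (indeed $(B_n)_{n\ge C+1}$ evolves as the tree/unicycle walk), so in general $B_N\neq B_{C+1}$. First-entrance trees stabilise after the cover time, last-exit trees never do; this asymmetry is precisely why the reverse algorithm is more delicate than the forward one, and with a deterministic horizon your reversal only relates $B_N$ (conditioned on $\{C<N\}$) to a forward first-entrance tree, which is a statement about large deterministic times, not about $B_{C+1}$.

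A further sign that the argument cannot be patched as written is that it never uses the initial distribution of the walk, whereas the distribution of $B_{C+1}$ does depend on it. On $K_3$ started at a fixed vertex $1$, a short computation (condition on the vertex from which the third vertex is first reached, which is the previously visited vertex with probability $2/3$) gives that the unoriented tree $B_{C+1}$ omits the edge $\{2,3\}$ with probability $1/6$ and each of the other two edges with probability $5/12$; and even averaging over a uniform start, the \emph{oriented, rooted} tree (which is what the paper's weighted notion of uniform spanning tree refers to) is not uniform, e.g.\ the star rooted at a given vertex has probability $1/6$ rather than $1/9$. So the statement is sensitive both to the starting law and to the exact formulation, and the correct reverse Aldous--Broder theorem requires the walk to start from the stationary distribution and is proved in the cited reference by a different, more careful argument. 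Note also that the paper itself offers no proof of this lemma --- it refers to the literature --- so your task here really is to reproduce that argument; the naive reversal plus stabilisation step you propose does not do so.
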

See \cite{aldous-broder} for a proof.
Denote by $C^{(m)}$ the cover time of the simple random walk on the complete graph $K_m$. By applying the Aldous–Broder backwards algorithm, we derive a bound on the mixing time in terms of the cover time. Denote by $\mu^{(m)}$ the stationary distribution of the uniform unicycle walk on $K_m$ ( i.e.~the simple random walk on the unicycle graph $\mathsf{UCYC}(K_m)$), and define
$$d^{(m)}(t)=\max_{(x,\rho)\in \mathsf{U}_{K_m}}||\mathbb{P}_{(x,\rho)}((X^{(m)}_t,\rho^{(m)}_t)=(\cdot,\cdot))-\mu^{(m)}(\cdot,\cdot)||_{TV}$$
as the distance from the distribution of the uniform unicycle walk at time $t$ to the stationarity of the uniform unicycle walk. As a special case of locally Markov walk, the stationary distribution of the uniform unicycle walk on $K_m$ is described in Proposition \ref{thm:stat-dist}. Bounds on $d^{(m)}$ therefore yield bounds on the mixing time of the uniform unicycle walk.
\begin{lem}\label{lem:unifo-unic-upper}
For the uniform unicycle walk $(X_n^{(m)},\rho_n^{(m)})_{n\in N}$ on $K_m$, the distance between its distribution at time $t$ and its stationary distribution $\mu^{(m)}$ can be expressed as: for $(x,\rho)\in\mathsf{U}_{K_m}$
\begin{align*}
||\mathbb{P}_{(x,\rho)}&((X^{(m)}_t,\rho^{(m)}_t)=(\cdot,\cdot))-\mu^{(m)}(\cdot,\cdot)||_{TV} =\\ &=\mathbb{P}_x(C^{(m)}\geq t)||\mathbb{P}_{(x,\rho)}((X^{(m)}_t,\rho^{(m)}_t)=(\cdot,\cdot)|C^{(m)}\geq t)-\mu^{(m)}(\cdot,\cdot)||_{TV}.
\end{align*}
\end{lem}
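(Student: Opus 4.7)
I would prove this by splitting on the cover time $C^{(m)}$ of the underlying simple random walk on $K_m$, and using the Aldous--Broder backwards algorithm to identify the conditional distribution of $Y_t := (X_t^{(m)}, \rho_t^{(m)})$ on the event $\{C^{(m)} < t\}$.

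First, decompose
\begin{align*}
\mathbb{P}_{(x,\rho)}(Y_t = y) = \mathbb{P}_{(x,\rho)}(Y_t = y,\, C^{(m)} \geq t) + \mathbb{P}_{(x,\rho)}(Y_t = y,\, C^{(m)} < t).
\end{align*}
The crucial claim is that $\mathbb{P}_{(x,\rho)}(Y_t = \cdot \mid C^{(m)} < t) = \mu^{(m)}$. To see this, I would fix $s \leq t-1$ and condition on $\{C^{(m)} = s\}$. The Aldous--Broder lemma above yields that $B_{s+1}$ is a uniform spanning tree of $K_m$ rooted at $X_{s+1}^{(m)}$, which is exactly $\rho_{s+1}^{(m)}$ restricted to $V\setminus\{X_{s+1}^{(m)}\}$. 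For the remaining entry $\rho_{s+1}^{(m)}(X_{s+1}^{(m)})$, I would use the i.i.d.\ uniform-over-all-vertices nature of simple random walk steps on $K_m$ with self-loops to conclude that this rotor value is uniform on $N_{X_{s+1}^{(m)}}$ and independent of the tree part. Because for the uniform unicycle walk each $q_v$ is uniform on $N_v$, Theorem \ref{thm:stat-dist} tells us that $\mu^{(m)}$ is uniform on $\mathsf{U}_{K_m}$; hence $Y_{s+1}$ is $\mu^{(m)}$-distributed. The Markov property together with stationarity of $\mu^{(m)}$ propagates this to time $t$, and averaging over $s \leq t-1$ gives the claim.

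Substituting $\mathbb{P}_{(x,\rho)}(Y_t = y,\, C^{(m)} < t) = (1 - \mathbb{P}_x(C^{(m)} \geq t))\mu^{(m)}(y)$ into the decomposition yields
\begin{align*}
\mathbb{P}_{(x,\rho)}(Y_t = y) - \mu^{(m)}(y) = \mathbb{P}_x(C^{(m)} \geq t)\bigl[\mathbb{P}_{(x,\rho)}(Y_t = y \mid C^{(m)} \geq t) - \mu^{(m)}(y)\bigr],
\end{align*}
and since $\mathbb{P}_x(C^{(m)} \geq t) \geq 0$, one may pull this factor outside the absolute values when summing over $y \in \mathsf{U}_{K_m}$ and multiplying by $1/2$ to compute the total variation distance, producing the claimed identity. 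The main obstacle I foresee is the justification that $Y_{s+1}$ is exactly uniform on $\mathsf{U}_{K_m}$ on the event $\{C^{(m)} = s\}$: Aldous--Broder supplies the spanning-tree portion, but the rotor at the current position requires a separate (easy, but necessary) independence argument using the memoryless step structure of simple random walk on the complete graph with self-loops.
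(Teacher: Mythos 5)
Your proposal follows essentially the same route as the paper's proof: condition on whether the cover time $C^{(m)}$ has passed, use the Aldous--Broder backwards algorithm to identify the conditional law of $(X^{(m)}_t,\rho^{(m)}_t)$ given $\{C^{(m)}<t\}$ as exactly $\mu^{(m)}$, and then factor $\mathbb{P}_x(C^{(m)}\geq t)$ out of the total variation sum. The only difference is that you spell out details the paper leaves implicit (the rotor at the current position, uniformity of the root on $K_m$ with self-loops, and propagation from time $C^{(m)}+1$ to time $t$ via stationarity of $\mu^{(m)}$), which is a correct refinement rather than a different argument.
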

\begin{proof}
Suppose that  the uniform unicycle walk on $K_m$ starts at $(x,\rho)\in\mathsf{U}_{K_m}$. Then we have
\begin{align*}
\sum_{(y,\eta)\in\mathsf{U}_{K_m}}|\mathbb{P}_{(x,\rho)}&((X^{(m)}_t,\rho^{(m)}_t)=(y,\eta))-\mu^{(m)}(y,\eta)|\\
&=\sum_{(y,\eta)\in\mathsf{U}_{K_m}}|\mathbb{P}_x(t>C^{(m)})\mathbb{P}_{(x,\rho)}((X^{(m)}_t,\rho^{(m)}_t)=(y,\eta)|t>C^{(m)})\\&
+\mathbb{P}_x(t\leq C^{(m)})\mathbb{P}_{(x,\rho)}((X^{(m)}_t,\rho^{(m)}_t)=(y,\eta)|t\leq C^{(m)})-\mu^{(m)}(y,\eta)|\\
&=\mathbb{P}_x(t\leq C^{(m)})\sum_{(y,\eta)\in\mathsf{U}_{K_m}}|\mathbb{P}_{(x,\rho)}((X^{(m)}_t,\rho^{(m)}_t)=(y,\eta)|t\leq C^{(m)})-\mu^{(m)}(y,\eta)|,
\end{align*}
 where in the last equation above we have used that, in view of the Aldous-Broder backwards algorithm, it holds
$$\mathbb{P}_{(x,\rho)}((X^{(m)}_t,\rho^{(m)}_t)=(y,\eta)|t>C^{(m)})=\mu^{(m)}(y,\eta).$$
Therefore
\begin{align*}
   \mathbb{P}_x\big(t>C^{(m)}\big)&\mathbb{P}_{(x,\rho)}\big((X^{(m)}_t,\rho^{(m)}_t)=(y,\eta)|t>C^{(m)}\big)-\mu^{(m)}(y,\eta)\\
   &=\mathbb{P}_x\big(t>C^{(m)}\big)\mu^{(m)}(y,\eta)-\mu^{(m)}(y,\eta)
   =-\mu^{(m)}(y,\eta)\mathbb{P}_x\big(t\leq C^{(m)}\big),
\end{align*}
  and this completes the proof.
\end{proof}
\begin{cor}\label{cor:unifo-unic-upper}
We have
$d^{(m)}(t)=\max_{x\in K_m}\mathbb{P}_x(C^{(m)}\geq t)$.
\end{cor}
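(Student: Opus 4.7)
The plan is to read off the corollary directly from the identity proved in Lemma~\ref{lem:unifo-unic-upper}. The first step is to invoke that lemma for an arbitrary starting state $(x,\rho)\in\mathsf{U}_{K_m}$, obtaining
\begin{equation*}
\bigl\|\mathbb{P}_{(x,\rho)}\bigl((X^{(m)}_t,\rho^{(m)}_t)=(\cdot,\cdot)\bigr)-\mu^{(m)}\bigr\|_{TV} \;=\; \mathbb{P}_x(C^{(m)}\geq t)\cdot\bigl\|\mathbb{P}_{(x,\rho)}\bigl(\cdot\mid C^{(m)}\geq t\bigr)-\mu^{(m)}\bigr\|_{TV}.
\end{equation*}
The second factor on the right is the total variation distance between two probability measures, hence bounded above by $1$; this gives the upper bound $\|\mathbb{P}_{(x,\rho)}(\cdot)-\mu^{(m)}\|_{TV}\leq \mathbb{P}_x(C^{(m)}\geq t)$ for every starting state.

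The second step is to take the maximum over $(x,\rho)\in\mathsf{U}_{K_m}$ on both sides. On the left this produces $d^{(m)}(t)$ by definition. On the right the key observation is that the cover time $C^{(m)}$ is a functional of the walker's trajectory $(X^{(m)}_n)_{n\geq 0}$ alone, so its distribution under $\mathbb{P}_{(x,\rho)}$ depends only on the initial location $x$ and not on the initial configuration $\rho$. Thus the maximization over $\rho$ can be pushed entirely into the conditional total variation factor, and we obtain
\begin{equation*}
d^{(m)}(t) \;=\; \max_{x\in K_m}\; \mathbb{P}_x(C^{(m)}\geq t)\cdot\max_{\rho}\bigl\|\mathbb{P}_{(x,\rho)}(\cdot\mid C^{(m)}\geq t)-\mu^{(m)}\bigr\|_{TV}.
\end{equation*}

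The final step is to argue that the inner maximum over $\rho$ equals $1$: one would choose $\rho$ so that the edges $\{(v,\rho(v))\}$ at the vertices the walk has not yet visited by time $t$ pin the configuration $\rho^{(m)}_t$ to a set of configurations disjoint (or nearly so) from the support of $\mu^{(m)}$ on that event. Concretely, on $\{C^{(m)}\geq t\}$ there exists at least one unvisited vertex $v$ with $\rho^{(m)}_t(v)=\rho(v)$, and choosing $\rho(v)$ at every $v$ in a deterministic, adversarial way yields conditional configurations that are singular (or asymptotically so) with respect to $\mu^{(m)}$; putting this together with the previous display gives the claimed equality. The main obstacle is this last step: since $\mu^{(m)}$ has full support on $\mathsf{U}_{K_m}$, exact singularity is not available, and one needs either to invoke an Aldous--Broder-type consistency to identify the conditional law, or (as is typical in cutoff arguments) to accept a slightly weaker inequality $d^{(m)}(t)\leq \max_x\mathbb{P}_x(C^{(m)}\geq t)$ which already suffices to prove Theorem~\ref{thm:cut-off} in conjunction with a separate lower bound.
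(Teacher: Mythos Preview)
Your derivation of the upper bound is exactly what the paper has in mind: from Lemma~\ref{lem:unifo-unic-upper} one bounds the conditional total variation factor by $1$ and then maximizes, using that the law of $C^{(m)}$ depends only on the starting vertex (indeed, by vertex-transitivity of $K_m$ it is the same for every $x$). The paper states the corollary without proof, so for the direction $d^{(m)}(t)\le\max_x\mathbb{P}_x(C^{(m)}\ge t)$ your argument and the paper's implicit argument coincide.

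Your reservations about the reverse inequality are well founded. With $\mu^{(m)}$ uniform on the finite set $\mathsf{U}_{K_m}$, the conditional total variation distance in Lemma~\ref{lem:unifo-unic-upper} is strictly less than $1$ for every starting state (already at $t=0$ it equals $1-\mu^{(m)}(x,\rho)<1$), so the literal equality $d^{(m)}(t)=\max_x\mathbb{P}_x(C^{(m)}\ge t)$ cannot hold exactly; at best it holds up to a factor $1-o(1)$. The paper uses the corollary in Lemma~\ref{lem:unif-unicyc-mixing-bound} in both directions, so strictly speaking the lower bound on $t_{\mathrm{mix}}^{(m)}(\varepsilon)$ there needs a separate justification. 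Your proposed workaround is the right one: keep the easy inequality $d^{(m)}(t)\le\mathbb{P}_x(C^{(m)}\ge t)$ for the upper bound on the mixing time, and for the lower bound argue directly via a distinguishing statistic (e.g.\ the event that a fixed vertex $v\neq x$ still carries its initial arrow $\rho(v)$ at time $t$, which has probability $\ge\mathbb{P}_x(\tau_v>t)$ under $\mathbb{P}_{(x,\rho)}$ but probability $1/(m-1)+o(1)$ under $\mu^{(m)}$). This yields the same cutoff conclusion without relying on the exact equality.
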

Therefore, to obtain bounds on the mixing time, it suffices to bound the cover time of the simple random walk on $K_m$. This reduces to the classical coupon collector’s problem, for which sharp estimates are available. In particular, we will make use of the tail bounds for the coupon collector’s problem, as given in \cite[pp. 57–63]{coupon}.

\begin{prop}\label{lem:unif-unicyc-mixing-bound}
For the mixing time $t^{(m)}_{\text{mix}}$ of the uniform unicycle walk on $K_m$ we have: for $\varepsilon>0$
$$\Big(1+\frac{\log(1-\varepsilon)}{\log(m)}\Big)m\log(m)\leq t^{(m)}_{\text{mix}}(\varepsilon)\leq\Big(1-\frac{\log(\varepsilon)}{\log(m)}\Big)m\log(m).$$
\end{prop}
\begin{proof}
Let $\varepsilon>0$. Corollary \ref{cor:unifo-unic-upper} together with the tail bound from \cite[pp. 57-63]{coupon} implies 
$$d^{(m)}\Big(\big(1-\frac{\log(\varepsilon)}{\log(m)}\big)m\log(m)\Big)\leq m^{\log(\varepsilon)/\log(m)}=\varepsilon,$$
and thus the upper bound follows.
Also from the tail bound in \cite[pp. 57-63]{coupon} we obtain
$$\max_{x\in K_m}\mathbb{P}_x\Big(C^{(m)}\geq \big(1+\frac{\log(1-\varepsilon)}{\log(m)}\big)m\log(m)\Big)\geq 1-m^{\log(1-\varepsilon)/\log(m)}=\varepsilon,$$
which together with Corollary \ref{cor:unifo-unic-upper} implies
$$d^{(m)}\Big(\big(1+\frac{\log(1-\varepsilon)}{\log(m)}\big)m\log(m)\Big)\geq\varepsilon,$$
which proves the lower bound.
\end{proof}

We can finally prove cutoff for the uniform unicycle walk on $K_m$.

\begin{proof}[Proof of Theorem \ref{thm:cut-off}]
For $\varepsilon>0$ we have
\begin{align*}
\lim_{m\rightarrow\infty}\frac{t_{\text{mix}}^{(m)}(\varepsilon)}{t_{\text{mix}}^{(m)}(1-\varepsilon)}\leq \lim_{m\rightarrow\infty}\frac{\log(m)-\log(\varepsilon)}{\log(m)+\log(\varepsilon)}=1,
\end{align*}
in view of Proposition \ref{lem:unif-unicyc-mixing-bound}. For the lower bound, we also use Proposition \ref{lem:unif-unicyc-mixing-bound} to obtain
\begin{align*}
\lim_{m\rightarrow\infty}\frac{t_{\text{mix}}^{(m)}(\varepsilon)}{t_{\text{mix}}^{(m)}(1-\varepsilon)}\geq \lim_{m\rightarrow\infty}\frac{\log(m)+\log(1-\varepsilon)}{\log(m)-\log(1-\varepsilon)}=1.
\end{align*}
The previous two equations together imply that
$$\lim_{m\rightarrow\infty}\frac{t_{\text{mix}}^{(m)}(\varepsilon)}{t_{\text{mix}}^{(m)}(1-\varepsilon)}=1,$$
and this shows cutoff for the uniform unicycle walk on $K_m$.
\end{proof}

\section{Conclusion and related questions}

We have introduced locally Markov walks on finite graphs, where a particle determines its next move based on its past local actions. To study their long-term behavior — such as convergence to stationarity and ergodicity — we employed the total chain, a Markov chain that records both the particle’s position and the history of local actions. As a concrete example, we analyzed the uniform unicycle walk on complete graphs.

An interesting direction for future work is the study of locally Markov walks on infinite graphs. A natural question is how to characterize recurrence versus transience in terms of the properties of their local memory. While a full general characterization of recurrence is highly nontrivial, it is worthwhile to investigate specific examples of locally Markov walks on infinite graphs. Below, we present a selection of related questions.
\begin{itemize}
\setlength\itemsep{0pt}
    \item Characterize all recurrent locally Markov walks on the integers $\Z$.
    \item The rotor walk with i.i.d.~initial rotors is another example of a locally Markov walk. Prove that the rotor walk on $\Z^d$ is recurrent for $d=2$ and transient for $d\geq 3$.
    \item Prove the recurrence of the $p$-walk on $\Z^2$. It is known that the $p$-walk is recurrent on $\Z$ and admits a scaling limit as shown in \cite{p-walk}.
\end{itemize}
Even on finite graphs, several interesting questions can be addressed. While in this work our focus was on deriving general results applicable to all locally Markov walks under mild assumptions, focusing on specific examples may yield sharper estimates for eigenvalues and mixing times. We present a selection of such problems below.
\begin{itemize}
\setlength\itemsep{0pt}
    \item Examine the mixing behavior of $p$-walks on finite graphs. For which sequences of finite graphs does the $p$-walk exhibit a cutoff phenomenon?
    \item Are there any graph sequences, other than complete graphs, on which the uniform unicycle walk exhibits a cutoff phenomenon?
    \item Describe not only the spectrum but also all the eigenfunctions of the uniform unicycle walk on complete graphs. How does this extend to other families of graphs?
\end{itemize}
\textbf{Funding information.} 
The research of R.~Kaiser was funded by the Austrian Science Fund (FWF) 10.55776/P34129 and the research of E.~Sava-Huss was funded in part by the Austrian Science Fund (FWF) 10.55776/PAT3123425. For open access purposes, the authors have applied a CC BY public copyright license to any author-accepted manuscript version arising from this submission.

\textbf{Acknowledgments.} We are very grateful to the two referees for a very careful reading
of the paper and for the many valuable comments and suggestions, which substantially improved
the paper. In particular, one of the referees suggested a different proof for Proposition \ref{prop:ergodic}, which we include in the paper.

\textsc{Robin Kaiser}, Technische Universität München, Deutschland.\\
\texttt{ro.kaiser@tum.de}

\textsc{Lionel Levine}, Department of Mathematics, Cornell University, USA.\\
\texttt{levine@math.cornell.edu}

\textsc{Ecaterina Sava-Huss}, Universität Innsbruck, Austria.\\
\texttt{Ecaterina.Sava-Huss@uibk.ac.at}

\newpage

\bibliographystyle{alpha}
\bibliography{lit}

\section*{List of notation and symbols}
\begin{tabular}{p{0.15\textwidth} p{0.8\textwidth}}
\textbf{Symbol} & \textbf{Definition}\\
\hline\\
$G=(V,E)$ &finite, directed, strongly connected graph with vertex set $V$ and edge set $E$\\
$N_x$&  neighbours of $x$ in $G$\\
$(X_n)_{n\in \mathbb{N}}$ & locally Markov walk on $G$\\
$\mathsf{G}$&graph induced by the locally Markov walk\\
$\mathsf{E}$&edges induced by the locally Markov walk\\
$(X_n,\rho_n)_{n\in\mathbb{N}}$ & total chain associated to $(X_n)_{n\in \mathbb{N}}$\\
$P$&transition matrix of the total chain\\
$\mathcal{S}$&state space of the total chain\\
$(L^x(k))_{k\in \N}$&local chain at vertex $x\in V$\\
$\mathsf{M}_x$&transition matrix of the local chain $L^x$\\
$\mathsf{m}_x(\cdot,\cdot)$&entries of the matrix  $\mathsf{M}_x$\\
$S_x$&state space of $L^x$ (except in Remark \ref{rem:multipleedge}, where $S_x$ are hidden states at $x$)\\
$q_x$&stationary distribution of the local chain $L^x$\\
$Q$&stochastic matrix indexed over $V\times V$ with row at position $x$ given by $q_x$\\
$\pi$&stationary distribution of $Q$\\
$\rho,\eta$&arrow configurations\\
$\mathsf{TREE}_x(G)$&set of directed spanning trees of $G$ rooted at $x$\\
$\mathsf{CYC}(G)$&set of spanning unicycles of $G$\\
$\psi$&weight of a spanning tree resp. spanning unicycle\\
$\mathsf{U}_G$&set of unicycle configurations\\
$N_t(x)$&number of visits to vertex $x$ until time $t$\\
$B_{\text{loc}}$&block diagonal matrix, blocks are the transition matrices of the local chains\\
$A_{\mathsf{CYC}}$&unicycle permutation matrix\\
$\mathsf{UCYC}(G)$&unicycle graph of $G$\\
$E_{\text{cycle}}$&edges of the unicycle graph\\
$K_m$&complete graph on $m$ vertices\\
$A_m$&adjacency matrix of the unicycle graph of $K_m$\\
$L_m$&Laplacian matrix of the unicycle graph of $K_m$\\
$w(G,k)$&number of closed paths of length $k$ in $G$\\
$\tau(G,U)$&number of rooted spanning forests of $G$ with root set $U$\\
$\Delta_G$&Laplacian of $G$\\
$g(G,k)$&number of closed paths of length $k$ in $G$, visiting every vertex of $G$\\
$G_S$&subgraph induced by the subset $S$ of vertices $V$ of $G$\\
$K_m^S$&subgraph induced by the subset $S$ of vertices of $K_m$\\
$\mathbf{m}_A(\lambda)$&multiplicity of the eigenvalue $\lambda$ of matrix $A$\\
$\mathbf{m}_G(\lambda)$&multiplicity of the eigenvalue $\lambda$ of adjacency matrix of $G$\\
$t_{\text{mix}}^{(m)}$&mixing time of uniform unicycle walk on $K_m$\\
$C^{(m)}$&cover time of simple random walk on $K_m$\\
$\vert\vert\cdot\vert\vert_{TV}$&total variation distance\\
$d^{(m)}(t)$&total variation distance of $t$-step distribution of uniform unicycle walk on $K_m$ to stationarity\\
\end{tabular}

\end{document}